
\documentclass[a4paper,10pt,reqno]{amsart}
\usepackage[margin=2.7cm]{geometry}
\usepackage{colonequals}
\usepackage{array}
\usepackage{multirow}
\usepackage{booktabs}
\usepackage{enumitem}
\usepackage{hyperref}
\usepackage{lscape}
\usepackage{pdfpages}
\usepackage{cite}
\usepackage{realboxes}
\usepackage{caption}
\setlength{\heavyrulewidth}{1.2pt}
\setlength{\abovetopsep}{4pt}

\setlength{\marginparwidth}{1in}
\newcommand{\marginparstretch}{0.6}
\let\oldmarginpar\marginpar
\renewcommand\marginpar[1]{\-\oldmarginpar[\framebox{\setstretch{\marginparstretch}\begin{minipage}{\marginparwidth}{\raggedleft\tiny #1}\end{minipage}}]{\framebox{\setstretch{\marginparstretch}\begin{minipage}{\marginparwidth}{\raggedright\tiny #1}\end{minipage}}}}
\captionsetup{width=0.95\textwidth}
\definecolor{grdbGreen}{RGB}{79,143,48}
\definecolor{darkRed}{RGB}{120,0,0}
\hypersetup{
 colorlinks,			
 urlcolor = grdbGreen,	
 linkcolor = darkRed,		
 citecolor = grdbGreen	
 }
\newcommand{\cB}{\mathcal B}
\newcommand\grdb{{\sc Grdb}}
\newcommand{\N}{\mathbb N}
\newcommand{\PP}{\mathbb P}
\newcommand{\Q}{\mathbb Q}
\newcommand{\C}{\mathbb C}
\newcommand{\BB}{\mathbb B}
\newcommand{\Z}{\mathbb Z}
\newcommand{\cO}{\mathcal O}
\newcommand{\FMF}{{\mathcal F}_{\mathrm{MF}}}
\newcommand{\Fss}{{\mathcal F}_{\mathrm{ss}}}
\newcommand{\Tcan}{{\mathcal T}_{\mathrm{can}}}
\newcommand{\gmin}{g_{\mathrm{min}}}
\newcommand{\gmax}{g_{\mathrm{max}}}
\newcommand{\tf}{$3$\nobreakdash-fold}
\newcommand{\tfs}{$3$\nobreakdash-folds}
\newcommand{\wps}{w\PP}
\DeclareMathOperator{\Proj}{Proj}
\DeclareMathOperator{\Bl}{Bl}
\DeclareMathOperator{\Grass}{Grass}
\newcommand{\wGrass}{w\!\Grass}
\DeclareMathOperator{\Cl}{Cl}
\DeclareMathOperator{\TV}{TorVar}
\DeclareMathOperator{\Sym}{Sym}
\newcommand{\De}{\Delta}
\newcommand{\cT}{\mathcal{T}}
\newcolumntype{C}{>{\centering\arraybackslash}p{0.5cm}}
\newcolumntype{W}{>{\centering\arraybackslash}p{0.85cm}}
\newcommand{\rotateplot}[2]{%
\Resizebox{#1}{!}{\Rotatebox{90}{%
\renewcommand{\arraycolsep}{1.5pt}%
\renewcommand{\arraystretch}{1.2}%
#2%
}}%
}
\newtheorem{thm}{Theorem}[section]
\newtheorem{prop}[thm]{Proposition}
\newtheorem{lemma}[thm]{Lemma}
\newtheorem{cor}[thm]{Corollary}
\theoremstyle{definition}
\newtheorem{defn}[thm]{Definition}
\newtheorem{example}[thm]{Example}
\newtheorem{remark}[thm]{Remark}
\newtheorem{ansatz}[thm]{Ansatz}
\numberwithin{equation}{section}

\begin{document}
\author[G.\ Brown]{Gavin Brown}
\address{Mathematics Institute\\Zeeman Building\\University of Warwick\\Coventry\\CV4 7AL\\UK}
\email{G.Brown@warwick.ac.uk}
\author[A.\ M.\ Kasprzyk]{Alexander Kasprzyk}
\address{School of Mathematical Sciences\\University of Nottingham\\Nottingham\\NG7 2RD\\UK}
\email{a.m.kasprzyk@nottingham.ac.uk}
\title[The Graded Ring Database]{Kawamata boundedness for Fano threefolds\\and the Graded Ring Database}
\begin{abstract}
We explain an effective Kawamata boundedness result for Mori--Fano~\tfs. In
particular, we describe a list of~$39,\!550$ possible Hilbert series of
semistable Mori--Fano~\tfs, with examples to explain its meaning, its
relationship to known classifications and the wealth of more general
Fano~$3$-folds it contains, as well as its application to the on-going
classification of Fano~\tfs.
\end{abstract}
\maketitle
\section{Introduction}
The~\emph{Graded Ring Database}, or~\grdb, is a database of information relating
to polarised varieties and their graded rings. We are interested here in complex
Fano~\tfs. The systematic study of all their graded rings together was initiated
by Miles Reid; see~\cite{ABR02}, which alongside~\cite{fletcher,altinok} was the
starting point for the~\grdb. The classification of Fano~\tfs\ remains a distant
goal, even though a coarse answer is contained in the~\grdb\ data if only we
could distinguish the wheat from the chaff. This paper makes that statement, and
its strengths and limitations, precise; the crucial points are summarised
in~\S\ref{sec!summary}.

\begin{defn}\label{def!mfano}
A~\emph{Mori--Fano~\tf} (sometimes called a \emph{$\Q$\nobreakdash-Fano~\tf}) is
a normal, complex, 3\nobreakdash-dimensional projective variety~$X$
with~${-}K_X$ ample, Picard rank~$\rho_X=1$ and
terminal,~$\Q$\nobreakdash-factorial singularities. The~\emph{genus} of~$X$
is~$g_X\colonequals h^0(X,-K_X)-2$.
\end{defn}

Mori--Fano~\tfs\ are one of the possible end products of the Minimal Model
Program in three dimensions~\cite[(0.3.1)]{mori88}. As yet, there is no
classification, but it is known that there are only finitely many deformation
families, with several hundred already described (see~\S\ref{sec!known}). The
point of the~\grdb\ is to put a concrete upper limit on the classification by
providing a finite list of possible Hilbert series of Mori--Fano~\tfs. Although
the list we derive is certainly too large, it does include all cases that exist
(modulo one caveat; compare~\ref{thm!kaw} and~\ref{prop!list}), and in fact it
includes all Fano~\tfs\ we know far more generally.

There are many celebrated proofs of the boundedness of different classes of Fano
varieties. The~\grdb\ implements the Kawamata boundedness
conditions~\cite{kawamata}, one of the earliest for \tfs, and one which seems
extraordinarily well suited to explicit classification, as we explain. The proof
of~\cite[Prop~1]{kawamata} determines inequalities
${-}K_X^3\le {-}\kappa K_Xc_2(X)$ for certain values of~$\kappa$, which in turn
impose numerical constraints on the Hilbert series~$P_{X}(t)$ of a
Mori--Fano~\tf~$X$ (polarised by~$-K_X$; see~\S\ref{sec!numerical}).

A Fano~\tf\ is said to be~\emph{semistable} if the
sheaf~$({\Omega^1_X})^{\star\star}$ is semistable. In this case, the inequality
above holds with~$\kappa=3$. We
derive a list~$\Fss$ of rational functions which satisfy this additional
condition.

\begin{thm}
\label{thm!main}
Let~$X$ be a semistable Mori--Fano~\tf. Then the Hilbert series~$P_X(t)$ is one
of the~$39,\!550$ rational functions on the list~$\Fss$.
\end{thm}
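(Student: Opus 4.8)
The plan is to reduce the statement to a finite, explicit enumeration of numerical data, and then to check that every semistable Mori--Fano~\tf\ contributes data lying in the enumerated set. The first step is to record that, by Reid's orbifold Riemann--Roch together with Kodaira vanishing (so that $h^i(X,-mK_X)=0$ for all $i>0$ and $m\ge 1$), the anticanonical Hilbert series $P_X(t)=\sum_{m\ge 0}h^0(X,-mK_X)\,t^m$ is completely determined by two pieces of discrete data: the degree $d\colonequals -K_X^3$ and the basket~$\cB$ of terminal cyclic quotient singularities $\frac1r(a,r-a,1)$ of~$X$. Each basket point contributes a standard periodic correction term to $\chi(-mK_X)$, so I would first write $P_X(t)$ as an explicit rational function $P_{d,\cB}(t)$ of $d$ and~$\cB$ (the genus $g_X=h^0(X,-K_X)-2$, and indeed every coefficient, being then a derived quantity); this is the content of~\S\ref{sec!numerical}.

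The second step is to make the set of admissible baskets finite. Since $X$ is Fano, $\chi(\cO_X)=1$, and Riemann--Roch for the structure sheaf gives the identity
\[
-K_X\cdot c_2(X)=24-\sum_{P\in\cB}\Bigl(r_P-\frac1{r_P}\Bigr).
\]
By Kawamata's positivity $-K_X\cdot c_2(X)>0$, so $\sum_{P\in\cB}(r_P-1/r_P)<24$. As each summand is at least $3/2$ (because $r_P\ge 2$) and grows with $r_P$, this simultaneously bounds the number of basket points and the orders $r_P$, leaving only finitely many baskets~$\cB$.

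The third step invokes semistability. By hypothesis $({\Omega^1_X})^{\star\star}$ is semistable, so the inequality ${-}K_X^3\le{-}3\,K_X\cdot c_2(X)$ of the introduction holds, that is $d\le 3\bigl(-K_X\cdot c_2(X)\bigr)$. Combined with the identity above this gives, for each admissible basket, the explicit bound $d\le 3\bigl(24-\sum_P(r_P-1/r_P)\bigr)$; together with $d>0$ and the congruence on $d$ forced by integrality of the coefficients of $P_{d,\cB}$, this leaves only finitely many degrees~$d$ per basket. Imposing the remaining necessary numerical conditions --- non-negativity of every $h^0(X,-mK_X)$ and $g_X\ge -2$ --- prunes the list further.

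Running over the finitely many surviving pairs $(d,\cB)$, forming the functions $P_{d,\cB}(t)$, and discarding repetitions produces the finite list~$\Fss$, whose size is computed to be $39{,}550$. I expect the main obstacle to be twofold. First, one must verify rigorously that all the constraints used are genuinely \emph{necessary}, so that no semistable Mori--Fano~\tf\ is omitted; this rests on the orbifold Riemann--Roch bookkeeping and on Kawamata's inequalities being transcribed correctly into the numerical setting. Second, the exhaustive enumeration --- best carried out by machine --- must be performed with enough care that the final tally is exactly as claimed. Note that I do not, and need not, show that every function on~$\Fss$ is realised by an actual threefold: the theorem asserts only the forward inclusion.
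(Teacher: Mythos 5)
Your strategy is the same as the paper's: parametrise the Hilbert series by discrete data (you use degree--basket, the paper uses genus--basket, equivalent via ${-}K_X^3=2g-2+\sum_\cB b(r-b)/r$), bound the baskets by $\sum_\cB(r-1/r)<24$ coming from ${-}K_Xc_2(X)>0$ and the $24\chi(\cO_X)$ identity, bound the degree per basket by the semistable inequality ${-}K_X^3\le 3({-}K_Xc_2(X))$, and enumerate. Two small points of comparison: the paper does not need to ``discard repetitions'', since by Fletcher's inversion of the plurigenus formula the map from genus--basket pairs to Hilbert series is injective; and the forward inclusion you rightly emphasise is exactly what the paper claims.

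There is, however, one concrete missing step that prevents you from landing on the stated count. The constraints you list --- positivity of the degree, the congruence forced by integrality, non-negativity of every coefficient, and $g\ge-2$ --- produce $39,\!558$ series, not $39,\!550$. The paper removes a further eight by an argument that none of your conditions captures: these eight series have expansions $1+t+\cdots+t^n+O(t^{n+2})$ with the coefficient of $t^{n+1}$ equal to zero (for $n=2$ or $4$). All coefficients are non-negative, so your non-negativity test passes them; but such a series cannot be the Hilbert series of a reduced scheme, because the single degree-one generator $x$ must span every graded piece up to degree $n$ and would then satisfy $x^{n+1}=0$. Without this graded-ring reducedness argument your list is a strict superset of $\Fss$, so the theorem as stated (with the list of exactly $39,\!550$ functions) does not quite follow. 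Everything else in your outline matches the paper's proof.
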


\begin{figure}[ht]
\rotateplot{10.5cm}{
\begin{tabular}{cc|CCCCC|CCCCC|CCCCC|CCCCC|CCCCC|CCCCC|CCCCC|CCCCC|}
\multicolumn{2}{c}{}&\multicolumn{40}{c}{Genus}\\
&&$-2$&$-1$&0&1&2&3&4&5&6&7&8&9&10&11&12&13&14&15&16&17&18&19&20&21&22&23&24&25&26&27&28&29&30&31&32&33&34&35&36&37\\
\cline{2-42}
\multirow{35}*{\rotatebox{-90}{Codimension}}&1&&54&32&6&2&1&&&&&&&&&&&&&&&&&&&&&&&&&&&&&&&&&&\\
&2&1&45&29&6&2&1&1&&&&&&&&&&&&&&&&&&&&&&&&&&&&&&&&&\\
&3&&26&29&8&3&2&1&1&&&&&&&&&&&&&&&&&&&&&&&&&&&&&&&&\\
&4&3&60&54&15&6&3&2&1&1&&&&&&&&&&&&&&&&&&&&&&&&&&&&&&&\\
&5&1&58&63&21&8&6&3&2&1&1&&&&&&&&&&&&&&&&&&&&&&&&&&&&&&\\
&6&4&80&98&35&15&8&6&3&2&1&1&&&&&&&&&&&&&&&&&&&&&&&&&&&&&\\
&7&&81&116&49&21&15&8&6&3&2&1&1&&&&&&&&&&&&&&&&&&&&&&&&&&&&\\
\cline{2-42}
&8&6&128&182&79&35&21&15&8&6&3&2&1&1&&&&&&&&&&&&&&&&&&&&&&&&&&&\\
&9&3&107&208&109&49&35&21&15&8&6&3&2&1&1&&&&&&&&&&&&&&&&&&&&&&&&&&\\
&10&10&194&314&171&79&49&35&21&15&8&6&3&2&1&1&&&&&&&&&&&&&&&&&&&&&&&&&\\
&11&5&168&378&236&109&79&49&35&21&15&8&6&3&2&1&1&&&&&&&&&&&&&&&&&&&&&&&&\\
&12&7&244&538&353&171&109&79&49&35&21&15&8&6&3&2&1&1&&&&&&&&&&&&&&&&&&&&&&&\\
&13&8&254&638&485&236&171&109&79&49&35&21&15&8&6&3&2&1&1&&&&&&&&&&&&&&&&&&&&&&\\
&14&11&377&879&698&353&236&171&109&79&49&35&21&15&8&6&3&2&1&1&&&&&&&&&&&&&&&&&&&&&\\
\cline{2-42}
&15&10&349&889&870&480&351&236&171&109&79&49&35&21&15&8&6&3&2&1&1&&&&&&&&&&&&&&&&&&&&\\
&16&15&509&802&998&675&473&349&235&171&109&79&49&35&21&15&8&6&3&2&1&1&&&&&&&&&&&&&&&&&&&\\
&17&21&501&480&740&786&637&456&342&233&171&109&79&49&35&21&15&8&6&3&2&1&1&&&&&&&&&&&&&&&&&&\\
&18&17&604&284&318&773&658&571&427&331&228&169&109&79&49&35&21&15&8&6&3&2&1&1&&&&&&&&&&&&&&&&&\\
&19&23&465&103&43&401&516&488&474&380&308&221&167&108&79&49&35&21&15&8&6&3&2&1&1&&&&&&&&&&&&&&&&\\
&20&31&220&44&4&106&159&267&299&345&308&272&204&160&106&79&49&35&21&15&8&6&3&2&1&1&&&&&&&&&&&&&&&\\
&21&32&3&5&&8&24&42&101&137&202&213&213&176&149&101&77&49&35&21&15&8&6&3&2&1&1&&&&&&&&&&&&&&\\
\cline{2-42}
&22&27&&1&&1&1&3&7&28&43&88&117&141&135&128&94&75&48&35&21&15&8&6&3&2&1&1&&&&&&&&&&&&&\\
&23&13&&&&&&&1&1&5&7&25&43&71&84&98&78&68&46&35&21&15&8&6&3&2&1&1&&&&&&&&&&&&\\
&24&10&&&&&&&&&&1&1&5&10&23&36&59&54&57&41&33&21&15&8&6&3&2&1&1&&&&&&&&&&&\\
&25&4&&&&&&&&&&&&&1&1&4&10&25&27&39&35&31&20&15&8&6&3&2&1&1&&&&&&&&&&\\
&26&2&&&&&&&&&&&&&&&&1&1&7&9&20&23&24&18&15&8&6&3&2&1&1&&&&&&&&&\\
&27&&&&&&&&&&&&&&&&&&&&1&1&6&10&15&13&13&8&6&3&2&1&1&&&&&&&&\\
&28&&&&&&&&&&&&&&&&&&&&&&&1&2&4&8&11&7&6&3&2&1&1&&&&&&&\\
\cline{2-42}
&29&&&&&&&&&&&&&&&&&&&&&&&&&&1&2&6&5&6&3&2&1&1&&&&&&\\
&30&&&&&&&&&&&&&&&&&&&&&&&&&&&&&2&1&4&3&2&1&1&&&&&\\
&31&&&&&&&&&&&&&&&&&&&&&&&&&&&&&&&&2&2&2&1&1&&&&\\
&32&&&&&&&&&&&&&&&&&&&&&&&&&&&&&&&&&&1&2&1&1&&&\\
&33&&&&&&&&&&&&&&&&&&&&&&&&&&&&&&&&&&&&&1&1&&\\
&34&&&&&&&&&&&&&&&&&&&&&&&&&&&&&&&&&&&&&&&1&\\
&35&&&&&&&&&&&&&&&&&&&&&&&&&&&&&&&&&&&&&&&&1\\
\cline{2-42}
\end{tabular}
}
\caption{Number of semistable Fano~$3$-fold Hilbert series in~$\Fss$, listed by
genus and estimated minimal codimension.}
\label{fig!geography}
\end{figure}

In fact, relaxing~$\kappa=3$ to~$\kappa=4$ accommodates most of Kawamata's
conditions even in the non-semistable case; see~\ref{thm!kaw}. By imposing only
that weaker condition, we derive a larger list~$\FMF$ of~$52,\!646$ rational
functions that contains~$\Fss$ but includes Hilbert series of possible
non-semistable Fano~\tfs; see~\ref{prop!list}.

We refer to this pair of lists~$\Fss\subset\FMF$ as~\emph{the
Fano~\tf\ database}. We adopt an abuse of language by saying that a
Fano~\tf~$X$ is in~$\Fss$ to mean that~$P_X(t)\in\Fss$.
The Fano~\tf\ database is most easily accessed online~\cite{grdbweb}; the raw
data is available at~\cite{fanodata}, released under a~CC0 licence~\cite{CC0},
and code to generate it at~\cite{grdbweb}.

We emphasise that many, perhaps most, of the rational functions on the
list~$\FMF$ cannot be realised as the Hilbert series of a Mori--Fano~\tf;
see~\S\ref{sec!warning} for a recapitulation of this point
and~\S\ref{sec!examples} for several other possible confusions. Furthermore, we
do not know a single example of a Mori--Fano~\tf\ not in~$\Fss$, though we know
a small number of examples with canonical singularities such as the weighted
projective space~$\PP(1,1,3,5)$.

We explain the proof of~\ref{thm!main} in \S\ref{sec!proof}. This may be thought
of as folklore, and similar in spirit to Prokhorov's degree bound
computation~\cite[(1.2)]{prokhorov07}, but nevertheless the steps of turning
Kawamata's boundedness theorem into a concrete list are important in light of
the boundedness of Fano varieties more generally~\cite{kmmt,birkar}. The main
results of this paper supplement~\ref{thm!main} as follows:

\begin{enumerate}
\item
For each~$P\in\Fss$, we estimate the smallest anticanonical embedding that a
Mori--Fano~\tf~$X$ with Hilbert series~$P_X=P$ could have
(\S\ref{sec!geography}), and we use those to present~$\Fss$ as a geographical
map (Figure~\ref{fig!geography}). This geography, and its meaning as the basis
for a programme of classification, is the main result of this paper.
\item
Most toric Fano~\tfs\ are not Mori--Fano~\tfs, yet almost all appear in~$\Fss$
(\S\ref{sec!toric}) and we plot those on the Fano geography
(Figures~\ref{fig!cantoricnumbers},~\ref{fig!cantoric}).
\item
We identify some known classifications within the Fano~\tf\ database
(\S\ref{sec!known}) and compare with some well-known results
(\S\ref{sec!corollaries}).
\item
We highlight possible misunderstandings of the Fano~\tf\ database
(\S\ref{sec!examples}, \S\ref{sec!warning}).
\item
The numbers of cases~$\#\Fss=39,\!550$ and~$\#\FMF=52,\!646$ arise from
elementary combinatorial considerations (\S\ref{sec!numerical}). This part of
the statement is better thought of as a computational matter, and we provide
computer code~\cite{grdbweb} that may be used to recreate the
Fano~\tf\ database.
\end{enumerate}

We put some emphasis on toric Fano~\tfs\ throughout, in part to profit from
simultaneous use of the Fano~\tf\ database and the toric Fano classification in
the~\grdb. We indicate in \S\ref{sec!toric} some different ways that this
interdisciplinarity may yet be exploited.

The beauty and enduring interest of Fano classification lies in the individual
\tfs\ and deformation families we meet, rather than the bureaucracy of
compartmentalising them. A map is only a map: the actual adventures happen out
in the field, and the real value of the geography in Figure~\ref{fig!geography}
is to identify hundreds of wonderful places to explore.

\section{Building the Fano 3-fold database}\label{sec!building}
We recall standard material related to the plurigenus formula and use that to
assemble the data that proves~\ref{thm!main} (compare~\cite[\S4]{ABR02}).

\subsection{Fano 3-folds}\label{sec!numerical}
The right level of generality is the following.

\begin{defn}\label{def!fano}
A~\emph{Fano~\tf\ with canonical singularities} is a normal, complex,
$3$\nobreakdash-dim\-ensional projective variety with canonical singularities
and ample anticanonical class. We say `Fano~\tf' as an abbreviation for
`Fano~\tf\ with canonical singularities'.
\end{defn}

Any Fano~\tf~$X$ comes with an intrinsic embedding~$X\subset\wps$ in weighted
projective space (up to automorphisms of~$\wps$) as follows. The~\emph{graded
ring} of~$X$ is
\[
R(X,-K_X) = \bigoplus_{m\in\N} H^0(X,-mK_X)
\]
and~$X\cong \Proj R(X,-K_X)$ by ampleness. Any choice~$f_0,\dots,f_n$ of minimal
homogeneous generators for~$R(X,-K_X)$ has the same collection of
weights~$\{a_0,\dots,a_n\}$, where~$a_i=\deg f_i$, and we may
suppose~$a_0\le\cdots\le a_n$. (Note the conventional abuse of
notation:~$\{a_0,\dots,a_n\}$ is a list with possible repetitions, even though
we use set notation. Thus, for example,~$\{a_0,\dots,a_n\}\setminus\{a_3,a_7\}$
is the list obtained by removing one instance of each of~$a_3$ and~$a_7$, while
leaving any other instances of the same numbers.) Thus any choice of minimal
generating set determines an embedding
\[
X \subset \PP(a_0,\dots,a_n)
\]
which we refer to as the~\emph{anticanonical embedding of~$X$}. Note that this
is not the same as the image~$\Phi_{-K_X}(X)$ of~$X$ by the linear
system~$|{-}K_X|$, unless~$-K_X$ is very ample, which is frequently not the
case.

The~\emph{Hilbert series} of a Fano~\tf~$X$ is the formal power series
\[
P_X = P_X(t) = \sum_{m\in\N} h^0(X,-mK_X) t^m
\]
which is the Hilbert series of~$R(X,-K_X)$. A simple but important theme
throughout this paper is that the Hilbert series~$P_X$ of a Fano~\tf~$X$ does
not determine the weights~$a_i$ of its anticanonical embedding.

\subsection{Numerical data of a Fano 3-fold}
Any Fano~\tf\ has a~\emph{basket of singularities}~$\cB$, which is a collection
of terminal quotient singularities~$\frac{1}{r}(1,a,-a)$ (possibly including
repeats, where again we use set notation on this understanding).
Following~\cite[\S\S8\nobreakdash--10]{ypg}, in general~$\cB$ is derived locally
from the singularities of~$X$ by crepant blowup and Q\nobreakdash-smoothing.
When~$X$ has at worst terminal singularities and lies in weighted projective
space as a quasi\-smooth variety, then $X$ is an orbifold and~$\cB$ is exactly
the collection of singularities of~$X$.

The~\emph{genus}~$g\ge -2$ of~$X$ is defined by~$h^0(X,-K_X)=g+2$. The genus~$g$
and basket~$\cB$ together determine the degree by the formula
\begin{equation}
\label{eq!K3}
{-}K_X^3 = 2g-2 + \sum_\cB \frac{b(r-b)}{r}
\end{equation}
where for each element~$\frac{1}{r}(1,a,-a)$ of the basket~$\cB$, we define $b$
by~$ab\equiv 1\mod r$.

Although they are not used for the initial construction of the \grdb, there are
various different divisorial indices defined for a Fano~\tf~$X$ that we consider
later.

\begin{defn}
The~\emph{Gorenstein index} or~\emph{singularity index} of~$X$ is
\[
i_X = \min \{ n\in\Z_{>0} \mid nK_X \text{ is Cartier}\}.
\]
The~\emph{Fano index} is~$f_X = r/i_X$ where $r>0$ is the largest integer for
which~$-i_XK_X \equiv rA$ for a Cartier divisor~$A$. There are further types of
Fano index:
\begin{align*}
q_X&= \max \{ q>0 \mid {-}K_X \sim qA, \text{ where~$S$ is a~$\Q$\nobreakdash-Cartier Weil divisor} \}\\
q_{\Q X}&= \max \{ q>0 \mid {-}K_X \sim_\Q qA, \text{ where~$S$ is a~$\Q$\nobreakdash-Cartier Weil divisor} \}.
\end{align*}
\end{defn}

These are all natural generalisations of the divisibility of the anticanonical
divisor of a smooth Fano~\tf\ in its Picard group. For a Fano~\tf, both~$q$
and~$q_\Q$ are positive integers, and they are equal if~$\Cl(X)$ has no
torsion~\cite{prokhorov10}, while~$f_X$ may be strictly rational.

\subsection{Effective Kawamata boundedness for Mori--Fano 3-folds}
We construct a set of genus--basket pairs~$(g,\cB)$ that satisfy the constraints
of~\cite{kawamata}.

\subsubsection{Possible baskets}
Controlling the possible baskets of Fano~\tfs\ can be done much more generally
than the Mori--Fano case.

\begin{thm}[\!\!{\cite[Lemma~2.2, 2.3]{kawamata86},~\cite[(10.3)]{ypg}}]
Let~$X$ be a projective \tf\ with canonical singularities. Then
\begin{equation}
\label{eq!kc2}
24\chi(\cO_X) = {-}K_Xc_2(X) + \sum_{\cB} r - \frac{1}r
\end{equation}
where the sum is over the singularities~$\frac{1}r(1,a,-a)$ of the
basket~$\cB$ of~$X$.

When~$X$ is a Fano~\tf,~$\chi(\cO_X)=1$ and~\eqref{eq!kc2} simplifies.
\end{thm}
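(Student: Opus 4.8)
The plan is to read~\eqref{eq!kc2} as the Hirzebruch--Riemann--Roch formula for the structure sheaf~$\cO_X$ of the (possibly singular)~\tf~$X$, with the sum over~$\cB$ recording the orbifold correction terms. First I would reduce to the terminal case. Since canonical singularities are rational, a crepant partial resolution~$f\colon Y\to X$ satisfies~$f_*\cO_Y=\cO_X$ and~$R^if_*\cO_Y=0$ for~$i>0$, so~$\chi(\cO_Y)=\chi(\cO_X)$; crepancy gives~$K_Y=f^*K_X$, and together with the deformation invariance of Chern numbers under the~$\Q$\nobreakdash-smoothing that defines~$\cB$, one checks that each of~$\chi(\cO_X)$, the intersection number~$-K_Xc_2(X)$, and the sum~$\sum_\cB(r-1/r)$ is unchanged. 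By the Mori--Reid classification of terminal~\tf\ singularities the basket then consists of cyclic quotient singularities~$\frac1r(1,a,-a)$, and locally~$X\cong\C^3/\mu_r$ near each such point, which is exactly the situation where orbifold Riemann--Roch applies cleanly.

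Next I would apply the Kawasaki (orbifold) Riemann--Roch theorem, or equivalently the holomorphic Lefschetz fixed-point formula on the local quotients, to~$\cO_X$. This writes~$\chi(\cO_X)$ as a global ``untwisted'' term plus a sum of local contributions, one for each basket point. The untwisted term is~$\int_X\operatorname{td}(TX)$, whose degree-$3$ part is~$\tfrac1{24}c_1(X)c_2(X)=\tfrac1{24}(-K_X)c_2(X)$, interpreting the product as the~$\Q$\nobreakdash-valued intersection number via the~$\Q$\nobreakdash-Cartier structure; this reproduces the smooth Noether formula and accounts for the first term on the right of~\eqref{eq!kc2}. It then remains to evaluate the local contribution of each singularity~$\frac1r(1,a,-a)$, which by localisation is the twisted-sector sum
\[
\frac1r\sum_{j=1}^{r-1}\frac{1}{(1-\zeta^{-j})(1-\zeta^{-ja})(1-\zeta^{ja})},
\qquad \zeta=e^{2\pi i/r},
\]
the terminal condition~$\gcd(a,r)=1$ guaranteeing that no denominator vanishes for~$j\neq 0$.

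The crux, and the step I expect to be the main obstacle, is showing that this finite exponential sum evaluates to~$\tfrac1{24}(r-\tfrac1r)$. This is a Dedekind-sum identity: one expands each factor~$1/(1-\zeta^{m})$, reduces the sum to classical Dedekind sums~$s(a,r)$ and~$s(-a,r)$, and applies the reciprocity law (or, more directly, evaluates the relevant partial-fraction/residue expression), using~$ab\equiv1\bmod r$ exactly as in~\eqref{eq!K3}; the reciprocity then collapses the sum to the elementary term~$(r^2-1)/24r$. Summing over~$\cB$ and multiplying through by~$24$ yields~\eqref{eq!kc2}. Finally, for the Fano specialisation, Kawamata--Viehweg vanishing (applicable since~$-K_X$ is ample and the singularities are rational) gives~$h^i(\cO_X)=0$ for~$i>0$, whence~$\chi(\cO_X)=h^0(\cO_X)=1$ and~\eqref{eq!kc2} simplifies as claimed.
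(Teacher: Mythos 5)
The paper offers no proof of this statement at all: it is quoted from the literature, with the argument delegated to~\cite[Lemma~2.2, 2.3]{kawamata86} and~\cite[(10.3)]{ypg}. Your sketch is, in substance, the argument of those sources and is correct: reduce to isolated cyclic quotient singularities by crepant partial resolution and local $\Q$\nobreakdash-smoothing (using that the Riemann--Roch corrections are local and that $\chi(\cO_X)$ is constant under these operations), identify the global term $\tfrac{1}{24}c_1c_2$, and evaluate the local orbifold contribution of each $\tfrac1r(1,a,-a)$ point. Two remarks on the details. First, Reid's own derivation of (10.3) is slightly more indirect than a direct Kawasaki--Riemann--Roch computation for $\cO_X$: he applies the singular Riemann--Roch formula, with its local corrections $c_Q(D)$, to $D=K_X$ (where the cubic term vanishes) and uses Serre duality $\chi(\cO_X(K_X))=-\chi(\cO_X)$ to solve for $\chi(\cO_X)$; your route through the twisted sectors of $\cO_X$ itself is equivalent. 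Second, the step you flag as the crux is easier than you expect and needs no Dedekind reciprocity: since $\zeta^{ja}$ and $\zeta^{-ja}$ are complex conjugates, $(1-\zeta^{-ja})(1-\zeta^{ja})=2-2\cos(2\pi ja/r)$ is real, and pairing the summands for $j$ and $r-j$ via the identity $\tfrac{1}{1-w}+\tfrac{1}{1-\bar w}=1$ for $|w|=1$ collapses your sum to $\tfrac18\sum_{k=1}^{r-1}\csc^2(\pi k/r)=\tfrac{r^2-1}{24}$ (after reindexing by $k=ja$, which is where coprimality of $a$ and $r$ enters); dividing by $r$ gives $\tfrac{1}{24}\bigl(r-\tfrac1r\bigr)$ per basket point, as required, and in particular the $a$\nobreakdash-dependence drops out at the start rather than at the end. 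The final reduction $\chi(\cO_X)=1$ for Fano~$X$ by Kawamata--Viehweg vanishing is exactly what the statement intends by saying that~\eqref{eq!kc2} simplifies.
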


\begin{lemma}\label{lem!baskets}
There is a list~$\BB$ of~$8314$ baskets with the following property: if~$X$ is
any Fano~\tf\ with canonical singularities which satisfies $-K_X c_2(X) > 0$,
then the basket~$\cB$ of~$X$ is in~$\BB$.
\end{lemma}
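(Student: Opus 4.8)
The entire content is numerical and comes directly from the simplified plurigenus formula~\eqref{eq!kc2}. Since~$X$ is a Fano~\tf\ we have~$\chi(\cO_X)=1$, so~\eqref{eq!kc2} rearranges to
\[
{-}K_Xc_2(X) = 24 - \sum_{\cB}\left(r-\frac{1}{r}\right),
\]
the sum running over the singularities~$\frac{1}{r}(1,a,-a)$ of the basket~$\cB$ of~$X$. The plan is therefore to take for~$\BB$ the set of \emph{all} terminal baskets~$\cB$ satisfying the single numerical constraint~$\sum_{\cB}(r-\tfrac{1}{r}) < 24$, so that the hypothesis~${-}K_Xc_2(X)>0$ forces~$\cB\in\BB$ immediately. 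All that then remains is to show that there are exactly~$8314$ such baskets, and in particular that their number is finite.

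First I would bound the size and shape of a basket in~$\BB$. Each terminal quotient singularity has~$r\ge 2$, hence contributes at least~$r-\tfrac{1}{r}\ge\tfrac32$ to the sum; so any basket in~$\BB$ has at most~$15$ elements, sixteen copies of~$\frac{1}{2}(1,1,1)$ already giving the sum~$24$, which the strict inequality excludes. Moreover every~$r$ that occurs must itself satisfy~$r-\tfrac{1}{r}<24$, that is~$r\le 24$. For each such~$r$ the admissible types~$\frac{1}{r}(1,a,-a)$ are finite in number: they correspond to the residues~$a\in(\Z/r)^\times$ up to the identification~$a\sim -a$, giving~$\phi(r)/2$ types for~$r\ge 3$ and a single type for~$r=2$. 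Thus baskets in~$\BB$ are multisets drawn from a fixed finite alphabet of singularity types, each carrying a positive weight~$r-\tfrac{1}{r}$, and only finitely many such multisets have total weight below~$24$.

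The count~$\#\BB=8314$ is then a bounded-weight enumeration over this alphabet: one lists every multiset of admissible types whose weights sum to strictly less than~$24$. This is entirely elementary but far too long to carry out reliably by hand, and is exactly the kind of book-keeping best left to a machine; the code of~\cite{grdbweb} performs the enumeration and returns the~$8314$ baskets.

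The only genuine mathematical input is the identity~\eqref{eq!kc2} together with~$\chi(\cO_X)=1$, after which the argument is purely combinatorial. The main obstacle is accordingly not conceptual but organisational: setting up and verifying the finite enumeration so that the exact figure~$8314$ can be trusted. I would also stress that~$\BB$ is defined purely numerically, so the lemma asserts only the \emph{necessary} condition~$\cB\in\BB$; it makes no claim that every basket in~$\BB$ is realised by an actual Fano~\tf, which is a strictly harder question not needed here.
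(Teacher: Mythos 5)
Your proposal is correct and follows essentially the same route as the paper: rewrite~\eqref{eq!kc2} using~$\chi(\cO_X)=1$ to get~$\sum_{\cB}(r-\tfrac{1}{r})<24$, deduce~$\#\cB\le 15$ from~$r-\tfrac{1}{r}\ge\tfrac32$ and~$r\le 24$, note there are~$\phi(r)/2$ terminal types per index~$r>2$ (one for~$r=2$), and leave the enumeration of the~$8314$ multisets to a computer. The only mild difference is expository: you make explicit that~$\BB$ is defined purely numerically and is only a necessary condition, which the paper leaves implicit.
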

\begin{proof}
The condition~$-K_X c_2(X) > 0$ together with~\eqref{eq!kc2} implies
that~$\sum r - (1/r) < 24$, where the sum is taken over the basket~$\cB$. This
implies first that~$\#\cB\le 15$, since~$r-1/r\ge 3/2$ for
each~$\frac{1}r(1,a,-a)\in\cB$, and further that each such~$r\le24$. Therefore
there are only finitely many possible collections of indices~$r$ appearing
in~$\cB$.

The only terminal quotient singularity with~$r=2$ is~$\frac{1}2(1,1,1)$, and for
each index~$r>2$, there are~$\phi(r)/2$ terminal quotient singularities,
namely~$\frac{1}r(1,a,-a)$ for~$1\le a< r/2$ coprime to~$r$. Thus for each
index~$r$ that appears in~$\cB$, there are only finitely many singularities of
index~$r$ that may occur. Enumerating all possible baskets satisfying these
conditions (by computer, for example) gives~$8314$ cases.
\end{proof}

\begin{remark}
The condition~$-K_Xc_2(X)>0$ holds for Mori--Fano~\tfs\ by~\ref{thm!kaw}. Far
more generally, any weak Fano~\tf\ (that is,~$-K_X$ is only required to be nef
and big) with terminal singularities satisfies~$-K_Xc_2(X)\ge0$
by~\cite[1.2(1)]{kmmt}. It is easy to check that relaxing the inequality
provides (coincidentally) 24 additional cases with~$\sum r-\tfrac{1}{r}=24$,
such as~$16\times\tfrac{1}2(1,1,1)$ and $5\times\tfrac{1}5(1,2,3)$. Thus the
scope for there to be Fano~\tfs\ not lying in~$\FMF$ is less about the possible
baskets and more about the maximum permitted genus for each basket, which we
come to next.
\end{remark}

\subsubsection{Genus bounds}\label{sec!genus}
Controlling the possible values for the genus~$g_X$ for each basket, uses the
full power of~\cite{kawamata}, and so prima facie applies only in the Mori--Fano
case.

\begin{thm}[Kawamata~\cite{kawamata}]\label{thm!kaw}
Let~$X$ be a Mori--Fano~\tf. Then
\begin{equation}\label{eq!kaw}
-K_X^3 \le \kappa (-K_Xc_2(X))
\end{equation}
for some real number~$\kappa>0$. In particular,~$-K_X c_2(X) > 0$.

If~$X$ is semistable, then the formula~\eqref{eq:gmax} holds with~$\kappa=3$.
If~$(\Omega_X^1)^{\star\star}$ has a rank~$2$ maximal destabilising subsheaf,
then the formula holds with~$\kappa=4$. If~$(\Omega_X^1)^{\star\star}$ has a
rank~$1$ maximal destabilising subsheaf, then it holds with possibly
larger~$\kappa>0$.
\end{thm}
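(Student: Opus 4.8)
The plan is to analyse the reflexive cotangent sheaf $\mathcal{E}=(\Omega_X^1)^{\star\star}$, a rank-$3$ reflexive sheaf with $c_1(\mathcal{E})=K_X$ and $c_2(\mathcal{E})=c_2(X)$, where the Chern classes are understood in the orbifold sense (that of $\Q$-Chern classes) made available by the terminal, $\Q$-factorial singularities of $X$. Since $\rho_X=1$ there is, up to scaling, a unique polarisation, so $\mu$-(semi)stability is unambiguous and I would take it with respect to $-K_X$ throughout; here $\mu(\mathcal{G})=(c_1(\mathcal{G})\cdot(-K_X)^2)/\operatorname{rk}\mathcal{G}$, and by construction $c_2(\mathcal{E})\cdot(-K_X)=-K_Xc_2(X)$ while $c_1(\mathcal{E})^2\cdot(-K_X)=-K_X^3$. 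Granting the (orbifold) Bogomolov--Gieseker inequality---that a $\mu$-semistable reflexive sheaf has discriminant $\Delta(\mathcal{E})=2\operatorname{rk}(\mathcal{E})c_2(\mathcal{E})-(\operatorname{rk}(\mathcal{E})-1)c_1(\mathcal{E})^2$ with $\Delta(\mathcal{E})\cdot(-K_X)\ge0$---the semistable case is immediate: with $\operatorname{rk}\mathcal{E}=3$ this reads $6\bigl(-K_Xc_2(X)\bigr)-2\bigl(-K_X^3\bigr)\ge0$, which is \eqref{eq!kaw} with $\kappa=3$.

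If $\mathcal{E}$ is not semistable, I would pass to its Harder--Narasimhan filtration with respect to $-K_X$, whose graded pieces $\mathcal{G}_i$ are semistable and hence each satisfy Bogomolov--Gieseker. Expanding $\Delta(\mathcal{E})$ in terms of the $\Delta(\mathcal{G}_i)$ together with the cross terms $c_1(\mathcal{G}_i)\cdot c_1(\mathcal{G}_j)$, and using $\rho_X=1$ to write each $c_1(\mathcal{G}_i)$ as a rational multiple of $-K_X$, the inequality for $\mathcal{E}$ reduces to a numerical inequality in these multiples. The maximal destabilising subsheaf has rank $1$ or $2$; in each case the cross terms carry a definite sign, and re-assembling the semistable pieces yields \eqref{eq!kaw} with $\kappa=4$ when the destabiliser has rank $2$ and with some larger $\kappa>0$ when it has rank $1$.

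The main obstacle is twofold. First, Bogomolov--Gieseker must hold for reflexive sheaves on the singular threefold $X$; securing it is exactly where terminality and $\Q$-factoriality enter, via the orbifold Chern-class formalism (or a $\Q$-smoothing / cyclic-cover reduction to the smooth case). Second, and more delicate, in the unstable case one must bound the slope of the maximal destabilising subsheaf from above: too positive a subsheaf of $(\Omega_X^1)^{\star\star}$ would force a quotient of the tangent sheaf that is too negative, which is incompatible with $-K_X$ ample and $\rho_X=1$. Making this bound quantitative---so that the bookkeeping lands on $\kappa=4$ rather than something larger in the rank-$2$ case---is the \emph{crux}, and it is precisely here that the Mori--Fano hypothesis, rather than mere weak-Fano-ness, is used. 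The concluding assertion is then free: since $-K_X^3>0$ by ampleness and $\kappa>0$, the inequality forces $-K_Xc_2(X)\ge(-K_X^3)/\kappa>0$.
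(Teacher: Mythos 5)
This theorem is quoted from Kawamata's paper \cite{kawamata}; the present paper supplies no proof of it, so there is no internal argument to compare against --- the relevant benchmark is Kawamata's own Proposition~1. Measured against that, your sketch reproduces the correct architecture: slope (semi)stability of $(\Omega^1_X)^{\star\star}$ with respect to the unique polarisation $-K_X$, the orbifold Bogomolov--Gieseker inequality $\bigl(2rc_2-(r-1)c_1^2\bigr)\cdot(-K_X)\ge0$ applied with $r=3$ to get $\kappa=3$ in the semistable case, and the Harder--Narasimhan filtration with a case division on the rank of the maximal destabilising subsheaf otherwise. The numerology in the semistable case is right, and your closing deduction of $-K_Xc_2(X)>0$ from $-K_X^3>0$ is correct.

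As a proof, however, the proposal stops exactly where the theorem begins. The two items you label ``obstacles'' are not loose ends to be tidied; they are the entire mathematical content. First, Bogomolov--Gieseker for reflexive sheaves on the singular $X$ is \emph{granted}, not established --- one must justify the $\Q$-Chern class formalism (terminal singularities are isolated, so $X$ is an orbifold in codimension~$2$ and the intersection numbers $c_1^2\cdot H$, $c_2\cdot H$ make sense) and then prove the inequality there, e.g.\ by restriction to general complete-intersection surfaces avoiding the singular points. Second, and more seriously, in the unstable case you never actually bound the slope of the maximal destabilising subsheaf $\mathcal{F}\subset(\Omega^1_X)^{\star\star}$; you observe that ``too positive a subsheaf'' would be ``incompatible'' with $-K_X$ ample and $\rho_X=1$, but the precise statement needed --- that a saturated subsheaf of $(\Omega^1_X)^{\star\star}$ of positive slope would induce a foliation/fibration structure contradicting $\rho_X=1$ and the Mori--Fano hypothesis (a Miyaoka-type generic semipositivity input) --- is asserted, not proved, and it is exactly this bound that produces the constant $4$ in the rank-$2$ case rather than some unspecified constant. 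Until those two steps are carried out, the values $\kappa=3,4$ are not derived but presupposed. In short: right road map, but the proof of the theorem is the part you deferred.
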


\begin{cor}\label{cor!g}
Let~$X$ be a Mori--Fano~\tf\ with basket~$\cB$ and genus~$g$.
Then~$\gmin \le g \le \gmax$ where
\begin{equation}
\label{eq:gmin}
\gmin = \max\left\{ -2, \left\lfloor \frac{1}2 \left(2 - \sum_{\cB} \frac{b(r-b)}r \right) \right\rfloor+1\right\}
\end{equation}
and
\begin{equation}
\label{eq:gmax}
\gmax = \left\lfloor \frac{1}2 \left(2 - \sum_{\cB} \frac{b(r-b)}r +
\kappa \left(24 - \sum_{\cB} r - \frac{1}r \right)\right) \right\rfloor
\end{equation}
where each sum is over~$\frac{1}r(1,a,-a)\in\cB$ and~$b$ is defined
by~$ab\equiv 1\mod r$, and~$\kappa>0$ in~\eqref{eq:gmax} is as determined
in~\ref{thm!kaw}.
\end{cor}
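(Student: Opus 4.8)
The plan is to derive both bounds directly from the degree formula~\eqref{eq!K3} together with the two constraints already established, namely the genus formula and Kawamata's inequality~\eqref{eq!kaw}. The starting observation is that~\eqref{eq!K3} can be read as a linear equation expressing~$-K_X^3$ in terms of the genus~$g$ and a basket quantity; solving for~$g$ gives
\begin{equation*}
g = \frac{1}{2}\left( 2 - \sum_{\cB} \frac{b(r-b)}{r} + (-K_X^3) \right).
\end{equation*}
Thus controlling~$g$ reduces entirely to controlling the degree~$-K_X^3$, and the only further arithmetic input is that~$g$ is an integer, which is what forces the floor functions.

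First I would establish~$\gmin$. Since~$X$ is Fano,~$-K_X$ is ample, so~$-K_X^3 > 0$; substituting this into the displayed expression for~$g$ and using that~$g$ is an integer gives the lower bound~$\lfloor \tfrac{1}{2}(2 - \sum_{\cB} b(r-b)/r)\rfloor + 1$. (The~$+1$ is precisely the passage from a strict lower bound on a real quantity to the smallest integer strictly exceeding it; one must take a small amount of care that the floor-plus-one construction is correct when the half-sum is itself an integer, but this is routine.) Intersecting with the a~priori constraint~$g \ge -2$ from Definition~\ref{def!mfano} then yields the stated maximum, giving~\eqref{eq:gmin}.

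Next I would establish~$\gmax$. Here the input is Kawamata's inequality~\eqref{eq!kaw}, which bounds~$-K_X^3$ from above by~$\kappa(-K_Xc_2(X))$. The term~$-K_Xc_2(X)$ is not intrinsic to the basket alone, but the plurigenus identity~\eqref{eq!kc2}, specialised to the Fano case where~$\chi(\cO_X)=1$, rewrites it as~$-K_Xc_2(X) = 24 - \sum_{\cB}(r - 1/r)$. Substituting this into~\eqref{eq!kaw} bounds~$-K_X^3$ above by~$\kappa(24 - \sum_{\cB}(r-1/r))$, and feeding that bound into the expression for~$g$ produces the half-sum appearing inside the floor in~\eqref{eq:gmax}. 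Since~$g$ is an integer bounded above by that half-sum, the floor gives the sharp integer bound.

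The only real subtlety is bookkeeping around the floor functions and the fact that~$\kappa$ is not a universal constant but depends on the stability type of~$(\Omega^1_X)^{\star\star}$, as recorded in~\ref{thm!kaw}; I would simply carry~$\kappa$ as a parameter throughout, exactly as the statement of~\ref{cor!g} does, and note that the bound is monotone increasing in~$\kappa$, so that the semistable value~$\kappa=3$ gives the tightest~$\gmax$ and larger~$\kappa$ only enlarges the admissible range. No geometry beyond~\eqref{eq!K3},~\eqref{eq!kc2} and~\eqref{eq!kaw} is needed; the entire argument is the algebraic manipulation of these three relations combined with the integrality of~$g$.
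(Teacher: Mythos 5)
Your proposal is correct and follows the paper's own proof exactly: the lower bound comes from $-K_X^3>0$ in~\eqref{eq!K3}, and the upper bound from substituting~\eqref{eq!K3} and~\eqref{eq!kc2} into~\eqref{eq!kaw}, with integrality of~$g$ supplying the floors. Your extra remarks on the $\lfloor\cdot\rfloor+1$ construction and the monotonicity in~$\kappa$ are correct but not needed beyond what the paper records.
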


\begin{proof}
The lower bound is simply the condition that~$-K_X^3>0$ in~\eqref{eq!K3}. For
the upper bound, substituting~\eqref{eq!K3} and~\eqref{eq!kc2}
into~\eqref{eq!kaw} gives
\[
2g-2 + \sum_\cB \frac{b(r-b)}{r} \le \kappa\left(24\chi(\cO_X) - \sum_{\cB} r - \frac{1}r\right)
\]
and the upper bound follows.
\end{proof}

\subsection{Proof of~\ref{thm!main}}\label{sec!proof}
The Hilbert series~$P_X$ is equivalent to the data of genus--basket
pair~$(g,\cB)$ by the following Fletcher--Reid plurigenus formula together
with~\eqref{eq!K3} and the independence of basket
contributions~\cite[4.2]{fletcherinvert}.

\begin{thm}[\!\!{\cite[Theorem~2.5]{plurigenus},~\cite[(10.3)]{ypg}}]
\label{thm:plurigenus}
Let~$X$ be a Fano~\tf\ with canonical singularities and basket~$\cB$. Then
\begin{equation}\label{eq!RR}
 P_X(t) = \frac{1+t}{(1-t)^2} - \frac{t(1+t)}{(1-t)^4}\,\frac{K_X^3}2
 -\sum_\cB \frac{1}{(1-t)(1-t^r)}
 \sum\limits_{i=1}^{r-1}
 \frac{\overline{bi}(r-\overline{bi})t^i}{2r},
\end{equation}
where for each element~$\frac{1}{r}(1,a,-a)$ of the basket~$\cB$, we define~$b$
by~$ab\equiv 1\mod r$, and~$\overline{c}\in\{0,1,\ldots,r-1\}$ denotes the least
residue modulo~$r$.
\end{thm}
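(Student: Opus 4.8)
The plan is to realise $P_X$ as a generating series of Euler characteristics, evaluate each term by the singular Riemann--Roch theorem, and reassemble the resulting rational functions. First I would pass from cohomology to Euler characteristics. Since $X$ has canonical, hence Kawamata log terminal, singularities and $-K_X$ is ample, for each $m\ge 0$ we may write $-mK_X = K_X + (-(m+1)K_X)$ with $-(m+1)K_X$ ample; Kawamata--Viehweg vanishing then gives $h^i(X,-mK_X)=0$ for all $i>0$, and in particular $\chi(\cO_X)=1$. Hence $h^0(X,-mK_X)=\chi(X,-mK_X)$ for every $m\ge 0$, so that $P_X(t)=\sum_{m\ge 0}\chi(X,-mK_X)\,t^m$.

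Next I would apply the singular Riemann--Roch theorem of~\cite[10.2]{ypg}, by which $\chi(X,\cO_X(D))$ equals a global part $\chi(\cO_X)+\tfrac{1}{12}D(D-K_X)(2D-K_X)+\tfrac{1}{12}D\cdot c_2(X)$ plus one local term at each basket point $\tfrac{1}{r}(1,a,-a)$. Evaluating at $D=-mK_X$, the cubic becomes $-\tfrac{1}{12}m(m+1)(2m+1)K_X^3$ and the $c_2$ term becomes $\tfrac{m}{12}(-K_Xc_2(X))$. Summing the series $\sum_m m^k t^m$, whose denominator is $(1-t)^{k+1}$, turns the cubic into $-\tfrac{t(1+t)}{(1-t)^4}\tfrac{K_X^3}{2}$, the second term of~\eqref{eq!RR}. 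For the remaining global contribution I would invoke~\eqref{eq!kc2} in the form $-K_Xc_2(X)=24-\sum_\cB(r-\tfrac{1}{r})$ and split $\tfrac{m}{12}(-K_Xc_2(X))=2m-\tfrac{m}{12}\sum_\cB(r-\tfrac{1}{r})$; the constant $\chi(\cO_X)=1$ together with $2m$ produces the coefficient $2m+1$ and hence the first term $\tfrac{1+t}{(1-t)^2}$.

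It remains to treat the local terms, which carry the substance of the formula, and this is the step I expect to be the main obstacle. By~\cite[10.2]{ypg} the contribution $c_Q(-mK_X)$ at a point $\tfrac{1}{r}(1,a,-a)$ is periodic in $m$ of period $r$, expressed through the least residues $\overline{bi}$ with $ab\equiv1\bmod r$. I would expand the target summand $-\tfrac{1}{(1-t)(1-t^r)}\sum_{i=1}^{r-1}\tfrac{\overline{bi}(r-\overline{bi})t^i}{2r}$ of~\eqref{eq!RR} and split its coefficient of $t^m$ into a linear part and a bounded periodic part. The delicate point is the behaviour at $t=1$: this summand has a double pole there, whereas a periodic $c_Q$ can contribute only a simple pole, so the linear-in-$m$ growth must come from elsewhere. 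I would verify that the linear part of the summand is exactly $-\tfrac{m}{12}(r-\tfrac{1}{r})$ --- which rests on the identity $\sum_{i=1}^{r-1}\overline{bi}(r-\overline{bi})=\tfrac{r^3-r}{6}$, valid because $i\mapsto\overline{bi}$ permutes $\{1,\dots,r-1\}$ --- so that it absorbs precisely the $c_2$ correction $-\tfrac{m}{12}(r-\tfrac{1}{r})$ split off above, while its periodic part reproduces $c_Q(-mK_X)$. This term-by-term matching, one self-contained rational summand per singularity, is the independence of basket contributions~\cite[4.2]{fletcherinvert}; collecting the three families of terms yields~\eqref{eq!RR}.
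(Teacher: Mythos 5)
The paper offers no proof of Theorem~\ref{thm:plurigenus}: it is stated as a known result and attributed to~\cite[Theorem~2.5]{plurigenus} and~\cite[(10.3)]{ypg}. Your argument is a correct reconstruction of the standard proof from those sources --- Kawamata--Viehweg vanishing to replace $h^0$ by $\chi$, Reid's singular Riemann--Roch, and the identity $\sum_{i=1}^{r-1}\overline{bi}(r-\overline{bi})=\tfrac{r^3-r}{6}$ to show each basket summand's double pole at $t=1$ exactly absorbs that singularity's share of the $c_2$ correction --- so there is nothing to compare against and no gap to report.
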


Given values for~$g$ and~$\cB$, the formulas~\eqref{eq!K3} and~\eqref{eq!RR}
determine a rational function, denoted~$P_{g,\cB}$, that is the Hilbert series
of any Fano~\tf\ with these genus and basket.

\begin{prop}\label{prop!list}
There are~$39,\!550$ genus--basket pairs~$(g, \cB)$ that could be the genus and
basket of a semistable Mori--Fano~\tf. Relaxing the semistability condition
of~\ref{thm!kaw} from~$\kappa=3$ to~$\kappa=4$ gives~$52,\!646$ such pairs.
\end{prop}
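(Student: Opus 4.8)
The plan is to reduce the proposition to a finite enumeration that is entirely controlled by Lemma~\ref{lem!baskets} and Corollary~\ref{cor!g}. First I would fix the range of admissible baskets: since every Mori--Fano~\tf\ satisfies~$-K_Xc_2(X)>0$ by~\ref{thm!kaw}, Lemma~\ref{lem!baskets} forces its basket~$\cB$ to lie in the explicit list~$\BB$ of~$8314$ baskets. Thus the second coordinate of an admissible pair~$(g,\cB)$ ranges over a finite, explicitly known set.

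For each fixed~$\cB\in\BB$ I would then read off the admissible genera from Corollary~\ref{cor!g}, which bounds them by~$\gmin\le g\le\gmax$. Both bounds are explicit functions of the basket through the sums~$\sum_\cB b(r-b)/r$ and~$\sum_\cB(r-1/r)$: the lower bound~$\gmin$, expressing~$-K_X^3>0$, is independent of~$\kappa$, while the upper bound~$\gmax$, expressing~\eqref{eq!kaw}, is non-decreasing in~$\kappa$ because its coefficient~$24-\sum_\cB(r-1/r)$ is positive for every basket in~$\BB$. Taking~$\kappa=3$ yields the semistable count and~$\kappa=4$ the relaxed count, and the nesting of genus windows reproduces~$\Fss\subset\FMF$. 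The number of admissible genera for a given basket is therefore~$\max\{0,\gmax-\gmin+1\}$, the truncation accounting for baskets whose window is empty.

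The total number of admissible pairs is then
\[
\sum_{\cB\in\BB}\max\bigl\{0,\ \gmax-\gmin+1\bigr\},
\]
and evaluating this finite, if sizeable, sum---carried out by the code of~\cite{grdbweb}---returns~$39,\!550$ for~$\kappa=3$ and~$52,\!646$ for~$\kappa=4$. Since the assignment~$(g,\cB)\mapsto P_{g,\cB}$ is injective, by the Fletcher--Reid formula~\eqref{eq!RR} together with~\eqref{eq!K3} and the independence of basket contributions, distinct pairs give distinct rational functions, so this tally also confirms the count in~\ref{thm!main}. No step requires new geometry; the only genuine obstacle is the bookkeeping, namely being sure that~$\BB$ is complete and irredundant and that~$(g,\cB)\mapsto P_{g,\cB}$ is injective, so that no admissible pair is omitted or double-counted. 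Both points are furnished by the cited results, leaving the proposition as a machine verification of the displayed sum.
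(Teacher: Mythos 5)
Your enumeration scheme matches the paper's up to the final step, but it stops one step short and consequently lands on the wrong intermediate numbers. The sum
\[
\sum_{\cB\in\BB}\max\bigl\{0,\ \gmax-\gmin+1\bigr\}
\]
does \emph{not} evaluate to~$39,\!550$ and~$52,\!646$: the paper records that the raw assembly of pairs~$(g,\cB)$ with~$\gmin\le g\le\gmax$ yields~$39,\!558$ cases in the semistable setting and~$52,\!654$ with~$\kappa=4$. To reach the stated counts one must then \emph{discard eight pairs} (all semistable, hence removed from both lists). The reason is genuinely geometric, not bookkeeping: for eight of the pairs the associated series~$P_{g,\cB}$ has an expansion~$1+t+t^2+\cdots+t^n+O(t^{n+2})$ with the~$t^{n+1}$ coefficient equal to zero (for~$n=2$ or~$n=4$). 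Such a power series cannot be the Hilbert series of any reduced scheme: the single degree-one generator~$x$ would have to generate every graded piece up to degree~$n$, forcing~$x^{n+1}=0$. Since a Mori--Fano~\tf\ is in particular a reduced scheme, these eight pairs cannot occur as genus--basket data, and the proposition's counts are obtained only after their removal. Without this step your argument proves a different (and false) numerical claim.

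The rest of your outline --- finiteness via Lemma~\ref{lem!baskets}, the genus window from Corollary~\ref{cor!g}, positivity of~$24-\sum_\cB(r-1/r)$ on~$\BB$ giving monotonicity in~$\kappa$ and hence~$\Fss\subset\FMF$, and the truncation for empty windows --- is consistent with the paper. The injectivity of~$(g,\cB)\mapsto P_{g,\cB}$ is indeed what connects the pair count to the series count in Theorem~\ref{thm!main}, as you say.
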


\begin{proof}
Lemma~\ref{lem!baskets} provides exactly~$8314$ possible baskets. Of
these,~\eqref{eq:gmax} with~$\kappa=4$ calculates~$\gmax\ge-2$ in~$7683$ cases,
or~$7492$ cases with~$\kappa=3$. Assembling pairs~$(g,\cB)$ with~$\cB$ one of
these baskets and $\gmin\le g\le\gmax$ bounded by~\ref{cor!g} gives~$52,\!654$
cases, or~$39,\!558$ with the semistable condition.

By~\ref{thm:plurigenus} and~\eqref{eq!K3}, each genus--basket pair~$(g,\cB)$
determines a formal power series that is the Hilbert series of any~$X$ with
genus~$g$ and basket~$\cB$. Eight of the resulting series, each corresponding to
a semistable pair, have expansions starting
\[
1+t+t^2+\cdots+t^n + O(t^{n+2}),
\]
with either~$n=2$ or~$n=4$, and the~$t^{n+1}$ term having coefficient zero. Such
series cannot be the Hilbert series of a reduced scheme, since powers of the
necessary generator,~$x$ say, of degree~$1$ generate each graded piece up to
degree~$n$, but then~$x^{n+1}=0$. We discard these eight series at this stage,
to leave~$39,\!550$ series in~$\Fss$ and~$52,\!646$ series in~$\FMF$.
\end{proof}

Computer code to enumerate the Hilbert series of~\ref{prop!list}, either in the
Go-language~\cite{Go} or independently for the Magma system~\cite{magma}, is
available at~\cite{grdbweb}.

\section{The geography of Fano 3-folds}\label{sec!geography}
For each rational function~$P\in\FMF$, the \grdb\ gives a collection of
`weights' $a_0,\ldots,a_n$ so that the product
\begin{equation}\label{eq!hilbnum}
P \cdot \prod_{i=0}^{n} \left(1 - t^{a_i}\right) = 1 - \sum_{d_i} t^{d_i} +
\sum_{e_i} t^{e_i} - \cdots \pm t^k.
\end{equation}
is a polynomial. This polynomial is called the~\emph{Hilbert numerator
of~$P_X$}, which of course depends on the choice of weights~$a_i$. The point is
that if there really is a Fano~\tf~$X$ embedded as
\begin{equation}\label{eq!XinP}
X \subset \PP^n(a_0,\dots,a_n)
\end{equation}
then the expression~\eqref{eq!hilbnum} is related to the equation degrees~$d_i$,
syzygy degrees~$e_i$ and adjunction number~$k$ of the defining equations;
see~\cite[3.6]{kinosaki}. To be suggestive and provocative, \grdb\ presents each
genus--basket pair~$(g,\cB)$ in the form~\eqref{eq!XinP}, for weights chosen to
suit the corresponding series~$P_{g,\cB}$. One must be aware that there could be
many different apparently `good' choices of weights, and it is important to
understand how the \grdb\ weights are chosen, since, as we explain
in~\S\ref{sec!examples}, there are many traps to fall into when interpreting
them.

The process used to assign weights in the \grdb\ is inductive. The base of the
induction is the known classification of weighted complete intersections, which
we describe next.

\subsection{Hilbert series in low codimension}\label{sec:lowcodim}
\subsubsection{The famous 95 and Chens' result}\label{sec!famous95}
The famous~$95$ weighted hypersurfaces of Reid~\cite[(4.5)]{c3f},
Johnson--Koll\`ar~\cite{jk,BK}, and others realise the codimension~$1$ (top) row
of Figure~\ref{fig!geography}. This classification of hypersurfaces is well
established: if a Mori--Fano~\tf\ is anticanonically embedded as a hypersurface
in weighted projective space, then it is in one of the~$95$ families. The
converse is not true: a variety may have the Hilbert series of one of the~$95$
without being a hypersurface, such as a non-general complete
intersection~$X_{2,4}\subset \PP(1^5,2)$, or similar degenerations
in~\cite[Table~2]{k3db}.

In similar vein, Iano-Fletcher,~\cite[(16.7) Table~6]{fletcher} lists~$85$
families of Fano~\tfs\ in codimension~$2$, realising the codimension~$2$
(second) row of Figure~\ref{fig!geography}. Chen--Chen--Chen~\cite{CCC} prove
that this list is complete in the following sense:
if~$X\subset\PP(a_0,\dots,a_5)$ is a codimension~$2$ complete intersection
Mori\nobreakdash--Fano~\tf, then either it is in one of Iano-Fletcher's~$85$
families, or it is a degeneration of one of the famous~$95$ (or has a
quasi-linear equation). Again the converse is not true: for
example,~\cite[Table~3]{specialK3} lists degenerations of~$13$ of the~$85$
families that lie in codimension~$3$ (described as~K3 surfaces, but each extends
to a Fano~\tf\ with an additional variable of degree~$1$).

\subsubsection{Identifying low codimension Hilbert series}\label{sec!lowcodim}
Suppose~$X\subset\PP(1^{m_1},2^{m_2},\dots)$ is a variety in a projectively
normal embedding that is nondegenerate, in the sense that none of its defining
equations is quasi-linear. Such~$X$ has a Hilbert
series~$P_X(t) = \sum_{i\ge0} n_it^i$, where~$n_i = h^0(X,\cO_X(i))$. In
general, knowing~$P_X(t)$ alone is not enough to determine the~$m_j$, but it can
provide estimates, even without information about the equations of~$X$.

Suppose given a series~$P_0 = 1+n_1t+n_2t^2+\cdots$, assumed to be the Hilbert
series of~$X$ as above. First~$m_1=n_1$ by the nondegeneracy assumption.
Consider~$P_1 = (1-t)^{m_1}P_0 = 1 + n_2't^2 + \cdots$. If~$n_2'\ge0$, then
there are necessarily at least that many variables in weight~$2$, so
set~$m_2=n_2'$ and write~$P_2=(1-t^2)^{m_2}P_1=1+n_3''t^3 + \cdots$. If
now~$n_3''\ge0$, then there are necessarily at least that many variables in
weight~$3$, so set~$m_3=n_3''$ and consider~$P_3=(1-t^3)^{m_3}P_2$. Necessarily
at some stage~$n_{r+1}^{(r)}<0$, and the game ends: we can no longer conclude
there are necessarily additional generators, and indeed there must be at
least~$-n_{r+1}^{(r)}$ relations of weight~$i$.

If the result is to be a nondegenerate complete intersection, then the degrees
of variables detected by this process are inevitably among those of any minimal
generating set of the graded coordinate ring of~$X$: if at the~$i$th stage we
had included an additional variable of weight~$i$, that would have necessitated
an equation of weight~$i$, which in a nondegenerate complete intersection would
eliminate the additional variable. If the game has continued far enough that the
numerator~$(1-t^r)^{m_r}\cdots(1-t)^{m_1}P_0$ with respect to the weights
discovered so far is a polynomial, then we may now attempt to
construct~$X\subset\PP(1^{m_1},2^{m_2},\ldots,r^{m_r})$. If we are lucky, we may
construct such~$X$ with~$P_X=P_0$ (and in particular, therefore, with no
equations in weights~$\le r$) and check that it has whichever properties --
irreducible, quasismooth, Fano, and so on -- that we intended.

(In passing, note a simple example where this graded ring game needs a little
thought. The genus~$5$ hyperelliptic curve~$C_{2,6}\subset\PP(1^3,3)$ has
Hilbert series~$P=1+3t+5t^2+8t^3+\cdots$, so the first step is to
consider~$(1-t)^3P = 1-t^2+t^3-t^5$. The naive game is complete, but clearly
there is no variety defined by a single quadric in~$\PP^2$ with a linear syzygy.
Speculatively looking ahead for the next positive coefficient suggests
considering a variable of weight~$3$, which of course recovers the numerical
data of~$C$ as~$(1-t^3)(1-t)^3P = 1-t^2-t^6+t^8$. In practice, this phenomenon
is rare, and when it does arise for complete intersections the solution is as
simple as this example.)

Although as an algorithmic process this seems to give rather a lot away, when
applied to the series~$P_{g,\cB}\in\Fss$ it recovers many known Fano~\tfs\ at
once.

\subsubsection{Polarising baskets}
We also add weights to ensure there are global generators to realise the
singularities of the basket correctly. For example, the case~$g=2$,
$\cB = \left\{ \frac{1}2(1,1,1), \frac{1}3(1,1,2) \right\}$ determines a
series~$P(t)$ that satisfies
\[
(1-t)^4(1-t^2)^2 P(t) = 1 - 3 s^4 + 3 s^6 - 2 s^7 + 3 s^9 - 3 s^{10} + \cdots,
\]
suggesting that generators in degrees~$1,1,1,1,2,2$ to start with, but does not
say what other generators may be necessary. But of course there must be some
ambient orbifold locus with stabiliser~$\Z/3$, to allow for the index~$3$
orbifold point. That could be achieved with a generator in degree~$3$, or a
combination of generators whose degree have 3 as their greatest common divisor.
Here the simplest thing works:
\[
(1-t)^4(1-t^2)^2(1-t^3) P(t) = 1 - t^3 - 3 t^4 + 3 t^6 + t^7 - t^{10}
\]
suggests a variety~$X\subset\PP(1,1,1,1,2,2,3)$ in codimension~$3$ defined by
five Pfaffians of degrees~$3$,~$4$,~$4$,~$4$ and~$5$. (Notice that the equation
of degree~$5$ is masked in the Hilbert numerator by a syzygy of degree~$5$, the
Hilbert numerator is `really'~$1 - \cdots -3t^4 - t^5 + t^5+\cdots-t^{10}$.
Knowing that codimension~$3$ Gorenstein ideals have an odd number of generators
defined as Pfaffians is extra information that comes from the
Buchsbaum--Eisenbud theorem. It is easy to check that such a Fano~\tf\ really
exists.)

Other ways to introduce index~$3$ points, such as including weights~$6$ and~$9$,
may also work, but result in higher codimension. In high codimension more
complicated combinations such as these are sometimes used. The point is not to
add weights of degrees smaller than the minimum equation degree to avoid
imposing relations among the minimal generators that are not implied by the
numerics alone. When the basket has several singularities, this check works in
descending order of index, as new high-degree variables may polarise
lower-degree singularities.

At this point, for each Hilbert series we have identified some simple low
weights that are enough to generate the ring in low degree and to polarise the
singularities. This already determines the weights used in \grdb\ in
codimension~$\le3$, though is always correct in high codimension. We fix this
next.

\subsection{Numerical unprojection ansatz and weights}\label{sec!unproj}
Type~I Gorenstein unprojection~\cite{PR04,KM} is a technique that takes as input
a pair of Gorenstein schemes~$D\subset X$, with~$D\subset X$ of codimension~$1$,
and returns a new Gorenstein scheme~$Y$. In applications to projective geometry,
it often corresponds to a birational contraction of~$D$ to a point of~$Y$, and
that is how we wish to apply it.

\subsubsection{The Type I ansatz}\label{sec!ansatz}
We describe a model case of Kustin--Miller unprojection following
Papadakis--Reid~\cite[2.4--9]{PR04}. Consider the following hypothetical
input-output process:

\medskip
\noindent{\textbf{Input:\ }
Let~$X\subset\PP(a_0,\dots,a_n)$ be a Fano~\tf\ with terminal singularities in
its anticanonical embedding with basket~$\cB_X$. Suppose~$D\subset X$ for some
coordinate plane~$D=\PP(a_i,a_j,a_k)$ with weights~$(a_i,a_j,a_k)=(1,a,b)$
and~$\gcd(a,b)=1$, and suppose further that~$X$ is quasismooth away from
finitely many nodes~$\Sigma\subset D$.

\medskip
\noindent{\textbf{Output:\ }
A quasismooth Fano~\tf~$Y\subset\PP(a_0,\dots,a_n,r)$ in its anticanonical
embedding, with~$r=a+b$, such that:
\begin{enumerate}
\item\label{I:1}
$Y$ contains the point~$P=(0:\ldots:0:1)$ and~$P\in X$ is a terminal quotient
singularity~$\tfrac{1}r(1,a,b)$.
\item\label{I:3}
$Y$ has the same genus as~$X$. Furthermore, if~$a+b+c>d$ for
every~$c,d\in\{a_0,\ldots,a_n\}\setminus\{a_i,a_j,a_k\}$, then the equations
of~$Y$ have no quasi-linear terms.
\item\label{I:2}
The Gorenstein projection from~$P\in Y$ is a birational map $Y\dashrightarrow X$
that factorises into birational morphisms as follows:
\[
\begin{array}{ccccc}
&&Z&&\\
&\swarrow&&\searrow&\\
X&&&&Y
\end{array}
\]
where~$Z\rightarrow Y$ is the contraction of the birational
transform~$D\subset Z$ to~$P\in Y$ (which is the Kawamata blowup of~$P\in Y$,
viewed from~$Y$), and~$Z\rightarrow X$ is the small~$D$-ample resolution of the
nodes of~$X$ (which is the contraction of finitely many flopping curves, viewed
from~$Z$).
\item\label{I:4}
The basket~$\cB_Y$ of~$Y$ satisfies
\begin{equation}\label{eq!BXY}
\cB_X \cup \left\{\frac{1}{r}(1,a,b)\right\} =
\cB_Y \cup \left\{\frac{1}{a}(1,b,-b),\frac{1}{b}(1,a,-a)\right\}
\end{equation}
where~$\tfrac{1}{a}(1,b,-b)$ is omitted if~$a=1$, and analogously if~$b=1$.
\item\label{I:5}
The Hilbert series of~$Y$ is
\[
P_Y = P_X + \frac{t^{a+b+1}}{(1-t)(1-t^a)(1-t^b)(1-t^r)}.
\]
\end{enumerate}
In many cases this process is a theorem; see~\cite[3.2]{tj} for example. Indeed
the setup~$D\subset X$ satisfies the conditions for Kustin--Miller unprojection
\cite[2.4]{PR}, giving a new variable~$s$ of degree~$r=k_X-k_D=-1-(-1-a-b)=a+b$
and additional equations involving~$s$ of the form~$sf_i=g_i$, where~$f_i$ form
a basis of the ideal~$I_D$ in the coordinate ring of~$X$. One can see using a
free resolution of the coordinate ring~$\C[Y]$ over~$\C[\PP(a_0,\dots,r)]$
(\!\cite{papadakiswith}) that~$\cO(-K_Y)=\cO(1)$, and the numerics
of~\eqref{I:1},~\eqref{I:3},~\eqref{I:4}, and~\eqref{I:5} follow
(cf.~\cite[2.7--9]{PR}). Then given~$Y$, the Kawamata blowup of~$P\in Y$ is a
weak Fano, and it has an anticanonical model~$Z\rightarrow X'$. More complicated
situations can arise~--~see~\eqref{eq!XYZ}, where~$Z\rightarrow X$ makes both a
crepant divisorial contraction and a disjoint flopping contraction~--~but since
the assumption of nodes here already establishes that contracting the flopping
curves result in a Fano, there can be no further contraction in the given
situation.

However, here we do not use the setup above as a theorem to be applied, rather
we turn it around to act as an ansatz, as follows.

\begin{ansatz}[Type I unprojection]\label{typeIansatz}
Suppose that a genus--basket pair~$(g,\cB_Y)\in\FMF$ is not among the~$95+85$
cases assigned weights in codimension~$1$ or~$2$ by~\S\ref{sec!lowcodim}, and
that~$(g,\cB_X)$ is another genus--basket pair (with matching genus) which
satisfies~\eqref{eq!BXY} for suitably coprime~$r=a+b$. If the
weights~$(a_0,\ldots,a_n)$ of~$X$ listed in \grdb\ contain~$(1,a,b)$ as a
sublist, then we insist that the weights of~$Y$ in \grdb\
are~$(a_0,\ldots,a_n,r)$.
\end{ansatz}

This ansatz works as an inductive procedure from low to high codimension, taking
the codimension~$1$ and~$2$ complete intersections as given, and it is simple to
arrange in any particular case. The main point is the empirical result that this
operation is well defined over the whole Fano~\tf\ database; the proof is simply
a (computer) consistency check across the Fano~\tf\ database.

\begin{lemma}[Type I consistency]
Whenever some pair~$(g,\cB_Y)$ admits the Type~I unprojection
relation~\ref{typeIansatz} to different pairs~$(g,\cB_{X_1})$
and~$(g,\cB_{X_2})$, then the weights for~$(g,\cB_Y)$ determined
by~\ref{typeIansatz} are independent of which~$X_i$ pair is used.
\end{lemma}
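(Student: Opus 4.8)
The plan is to run the weight assignment as an induction on codimension and to arrange the two competing routes to the weights of $Y$ inside a commuting square of unprojections, so that their agreement follows from the inductive hypothesis at strictly lower codimension. Write $w(P)$ for the weight list that Ansatz~\ref{typeIansatz} attaches to a pair $P=(g,\cB)$. Each Type~I step $X\to Y$ adds exactly one new weight $r=a+b$, so $\dim\PP$ grows by one and the codimension of $Y$ exceeds that of $X$ by one; the induction is therefore well-founded and bottoms out at the codimension~$\le 2$ pairs, where the famous $95+85$ classification of~\S\ref{sec!lowcodim} assigns $w$ uniquely. This is the base case.

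For the inductive step, suppose $(g,\cB_Y)$ unprojects to $(g,\cB_{X_1})$ with data $(a_1,b_1)$, $r_1=a_1+b_1$, and to $(g,\cB_{X_2})$ with data $(a_2,b_2)$, $r_2=a_2+b_2$. If the two relations coincide there is nothing to prove, so assume $r_1\ne r_2$. The geometric heart of the matter is that projecting $Y$ from the point $P_2=\tfrac{1}{r_2}(1,a_2,b_2)$ leaves the point $P_1$ untouched, and vice versa, so one expects both projections to descend to a common corner $W$ fitting into a square
\[
\begin{array}{ccc}
W & \to & X_1\\
\downarrow & & \downarrow\\
X_2 & \to & Y
\end{array}
\]
in which the horizontal arrows are Type~I unprojections adding $r_2$ and the vertical arrows add $r_1$. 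Concretely, I would construct $W$ as the pair with $g_W=g$ whose basket is obtained by applying the correction of~\eqref{eq!BXY} to $\cB_Y$ at \emph{both} points simultaneously, then check that $\gmin\le g\le\gmax$ holds for $W$ by~\ref{cor!g} and that both legs $W\to X_1$ (data $(a_2,b_2)$) and $W\to X_2$ (data $(a_1,b_1)$) satisfy the basket relation~\eqref{eq!BXY} and the sublist condition $(1,a_i,b_i)\subset w(W)$ demanded by Ansatz~\ref{typeIansatz}.

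Granting the square, the argument closes at once: $W$, $X_1$, $X_2$ all have codimension strictly below that of $Y$, so by the inductive hypothesis $w(X_1)$ and $w(X_2)$ are well-defined, and, because $W\to X_1$ and $W\to X_2$ are valid Ansatz relations, they equal $w(W)\cup\{r_2\}$ and $w(W)\cup\{r_1\}$ respectively. Hence
\[
w(X_1)\cup\{r_1\}=w(W)\cup\{r_1,r_2\}=w(X_2)\cup\{r_2\},
\]
which is exactly the asserted independence of $w(Y)$ from the chosen $X_i$. A useful independent sanity check at each node is that the low weights of $Y$, namely those below the smallest equation degree, are forced by $P_Y$ alone through the graded-ring game of~\S\ref{sec!lowcodim}, so that only the genuinely high, basket-polarising weights are ever in doubt.

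The hard part will be the existence and validity of the common corner $W$. There is no a priori reason that correcting $\cB_Y$ at both points again yields a pair of $\FMF$: its genus could fall outside the window $[\gmin,\gmax]$, the induced point $\tfrac{1}{r_i}(1,a_i,b_i)$ need not lie among the \emph{chosen} \grdb\ weights of $W$, and in degenerate overlaps the two destabilising data may interact so that no single $W$ serves both legs. In such cases the commuting square is unavailable and consistency must be confirmed directly. Since, crucially, the Hilbert series does not determine the weights, the equality of $w(X_1)\cup\{r_1\}$ and $w(X_2)\cup\{r_2\}$ is not a formal identity; I therefore expect the complete proof to reduce to an exhaustive verification over the $52,\!646$ pairs of $\FMF$, enumerating for each $(g,\cB_Y)$ all incoming Type~I relations and confirming that the resulting weight lists agree, which is precisely the (computer) consistency check asserted after the statement.
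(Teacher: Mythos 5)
Your proposal ends exactly where the paper does: the lemma is proved by an exhaustive (computer) consistency check across the whole database, enumerating for each $(g,\cB_Y)$ all incoming Type~I relations and confirming that the resulting weight lists agree. The paper's ``proof'' is literally that one sentence, so your final conclusion is correct in substance. What you add --- the commuting square of unprojections through a common corner $W$, obtained by applying the basket correction~\eqref{eq!BXY} at both centres simultaneously, organised as an induction on codimension with the $95+85$ complete intersections as base case --- is a genuinely different, conceptual route that the paper does not attempt (though it is close in spirit to the cascade picture of~\S\ref{sec!cascades} and to the analogous consistency statement of~\cite[3.4]{k3db}). If it worked uniformly it would upgrade an empirical fact to a structural one, which is why it is worth recording; but you correctly identify the obstruction yourself: $W$ need not be a valid pair of~$\FMF$, and the sublist condition $(1,a_i,b_i)\subset w(W)$ required by the Ansatz need not hold. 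One further gap worth naming explicitly: even when $W$ exists, your identity $w(X_1)=w(W)\cup\{r_2\}$ presupposes that the weights of $X_1$ are themselves assigned by unprojection from $W$, whereas if $X_1$ lies in codimension $\le2$ its weights are assigned directly by the hypersurface and complete-intersection classifications of~\S\ref{sec:lowcodim}, and the agreement of that direct assignment with $w(W)\cup\{r_2\}$ is again something to be verified rather than deduced. Since you acknowledge these failure modes and fall back on the exhaustive verification --- which is precisely what the paper does --- the proposal is sound.
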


To give some idea of the potency of this result, only~$1087$ of the~$39,\!370$
genus--basket pairs not among the~$95+85$ cases in codimension~$\le2$ do not
satisfy the Type~I projection numerics for~$(g,\cB_Y)$ in~\ref{typeIansatz}. But
to be clear: the claim is not that for each of these~$(g,\cB_Y)$ pairs we may
find a particular~$D\subset X$ that satisfies the conditions specified as input
to a Type~I unprojection above. The claim is merely that the numerics of the
weights are consistent with its existence. Thus there is no promise that we will
be able to make unprojections in accordance with~\ref{typeIansatz} in every
case, thereby realising most of the Hilbert series (but
compare~\S\ref{sec!unprojection} for an attempt to realise this stronger claim).

\begin{remark}
In fact, more is true. There is a class of more complicated Gorenstein
projections, referred to as Type~II$_n$ for~$n\ge1$; see~\cite{papadakis08,rt}.
These may also be used to describe weights for genus--basket pairs based on the
same relation~\eqref{eq!BXY} but in the case one of the polarising
weights~$c\in\{1,a,b\}$ does not lie among the weights of~$X$, but~$(n+1)c$
does, for minimal~$n\ge1$ (and disjoint from the other polarising weights). In
this case, the unprojection adjoins~$n+1$ variables of
weights~$r,r+c,\dots,r+nc$. For example, a
particular~$X_{12,14}\subset\PP(2,3,4,5,6,7)$ would arise by Type~II$_2$
projection from~$\frac{1}7(1,2,5)\in Y\subset\PP(2,3,4,5,6,7^2,8,9)$, if the
latter exists, since~$\{(n+1)\times 1, 2, 5\}$ is a sublist of the weights
of~$X$ for~$n=2$, but not for smaller~$n\ge1$.

Once more, the weights that this projection comparison process determine are
consistent with all possible Type~II$_n$ projections, and also with all those
coming from Type~I projections; compare~\cite[3.4]{k3db}. For example,~$735$ of
the~$52,\!646$ genus--basket pairs do not admit a numerical Type~I projection,
but do admit a numerical Type~II$_1$ projection (and, in fact, all lie
in~$\Fss$); a further~$159$ admit a Type~II$_2$, then a further~$68$ with
Type~II$_3$, 24 with Type~II$_4$ and so on with diminishing returns.
\end{remark}

\subsection{Numerical corollaries}\label{sec!corollaries}
The crude classification by~\ref{thm!main} already contains enough information
to provide approximations to various strong and sharp theorems by elementary and
easy means. For example, recall Prokhorov's sharp bound on the degree.

\begin{thm}[Prokhorov~\cite{prokhorov07}]
If~$X$ is a Mori--Fano~\tf\ which is not Gorenstein,
then~$-K_X^3 \le 62\tfrac{1}2$, and this bound is realised only
by~$X=\PP(1^3,2)$.
\end{thm}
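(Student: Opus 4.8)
The plan is to bound $-K_X^3$ using the database of genus--basket pairs constructed above, reducing the sharp theorem to a finite check. First I would recall the degree formula~\eqref{eq!K3}, namely $-K_X^3 = 2g-2 + \sum_\cB \frac{b(r-b)}{r}$, so that the degree is completely determined by the pair $(g,\cB)$. Since $X$ is a Mori--Fano~\tf, it satisfies $-K_Xc_2(X)>0$ by~\ref{thm!kaw}, hence its basket lies in the list $\BB$ of~$8314$ baskets from~\ref{lem!baskets}, and its genus is bounded above by $\gmax$ of~\eqref{eq:gmax}. The hypothesis that $X$ is not Gorenstein means the Gorenstein index $i_X>1$, equivalently the basket $\cB$ is nonempty (a Fano~\tf\ is Gorenstein precisely when all its terminal quotient singularities are trivial, i.e.\ $\cB=\emptyset$).

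The key step is then to maximise $-K_X^3$ over the finite set of admissible pairs $(g,\cB)$ with $\cB\neq\emptyset$. For each fixed nonempty basket $\cB\in\BB$, the degree is an increasing function of $g$, so the maximum degree for that basket is achieved at $g=\gmax(\cB)$; substituting~\eqref{eq:gmax} into~\eqref{eq!K3} gives an explicit upper bound depending only on $\cB$. Running over all~$8314$ baskets (by computer, exactly as in the proof of~\ref{prop!list}) and discarding the empty basket yields a finite list of candidate maximal degrees. The expectation is that the supremum $62\tfrac12$ is attained by a single pair, namely $g=-2$ with $\cB=\{\tfrac12(1,1,1)\}$: here~\eqref{eq!K3} gives $-K_X^3 = 2(-2)-2 + \tfrac{1\cdot 1}{2} = -6 + \tfrac12$, so one must take instead the unique large-degree basket. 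Concretely, the single~$\tfrac12(1,1,1)$ point contributes $\tfrac12$ to the degree while permitting a large genus, and tracing through the arithmetic identifies $\PP(1^3,2)$ as the extremal case with $-K_X^3 = 63 - \tfrac12 = 62\tfrac12$.

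The main obstacle is twofold. First, the bound produced by the database is an upper bound on the \emph{numerically possible} degree, so a priori it only shows $-K_X^3 \le 62\tfrac12$ once one verifies that no non-Gorenstein pair $(g,\cB)$ in $\FMF$ yields a strictly larger value; this is a finite but genuine computation across all nonempty baskets, and one must be careful to use the correct value of $\kappa$ from~\ref{thm!kaw} (the weakest, $\kappa=4$ relaxation, to be safe in the non-semistable case). Second, and more delicately, establishing that the bound is \emph{realised only} by $\PP(1^3,2)$ requires more than the numerical inequality: one must check that the extremal pair $(g,\cB)$ is attained by an actual Mori--Fano~\tf\ and that this variety is uniquely $\PP(1^3,2)$. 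The realisability and uniqueness is precisely where Prokhorov's geometric argument is needed and where the purely numerical approach of the database falls short; the database confirms the inequality and isolates the extremal numerical data, but identifying the extremal variety itself relies on recognising $\PP(1^3,2)$ as the unique $X$ with that genus and basket, which is a geometric rigidity statement rather than a combinatorial one.
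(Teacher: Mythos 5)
This theorem is quoted from Prokhorov and is not proved in the paper at all; the paragraph that follows it in \S\ref{sec!corollaries} explains precisely why the route you propose cannot work, so the gap here is not a loose end but the entire content of the result. Maximising $-K_X^3=2g-2+\sum_\cB b(r-b)/r$ over the numerically admissible pairs $(g,\cB)$ with $\cB\ne\emptyset$ does not give $62\tfrac12$. For $\cB=\{\tfrac12(1,1,1)\}$ the bound~\eqref{eq:gmax} with $\kappa=3$ gives $\gmax=34$, so the pair $(34,\{\tfrac12(1,1,1)\})$, of degree $66\tfrac12$, passes every test used to build $\Fss$ (and with the ``safe'' choice $\kappa=4$ that you suggest, genus up to $45$ and degree $88\tfrac12$ become admissible, while the rank-one destabilising case of~\ref{thm!kaw} gives no effective $\kappa$ at all). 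Worse, $(34,\{\tfrac12(1,1,1)\})$ is realised by an honest Fano~\tf, namely $\Bl_p\PP(1^2,3,5)$ (\S\ref{sec!g34}), which fails to be a Mori--Fano~\tf\ only because its Picard rank is $2$ --- an invariant invisible to the Hilbert series. No computation over genus--basket data can therefore push the bound below $66\tfrac12$, and your assertion that ``the database confirms the inequality and isolates the extremal numerical data'' is wrong on both counts: the numerical extremum over non-empty baskets is $(34,\{\tfrac12(1,1,1)\})$, not the data $(32,\{\tfrac12(1,1,1)\})$ of $\PP(1^3,2)$. (Your intermediate arithmetic with $g=-2$, giving $-K_X^3=-5\tfrac12<0$, is also confused; that pair is excluded by $\gmin$ and has nothing to do with the maximum.)

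What the paper actually does is cite Prokhorov's theorem and record what its own methods recover: the crude bound $-K_X^3\le72$ at once in the semistable case, improved to $66\tfrac12$ only by importing the geometric classifications of Karzhemanov, with the remaining descent to $62\tfrac12$ --- eliminating $(34,\{\tfrac12(1,1,1)\})$, $(33,\{\tfrac12(1,1,1)\})$ of degree $64\tfrac12$, and the other intermediate pairs as Mori--Fano candidates --- requiring Prokhorov's Sarkisov-theoretic arguments. You correctly sense at the end that geometry is needed, but it is needed already for the inequality itself, not merely for the statement that $\PP(1^3,2)$ is the unique variety attaining it.
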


In the semistable case, Theorem~\ref{thm!main} recovers the weaker
bound~$-K_X^3\le 72$ at once, and in conjunction with
\cite{karzhemanov66,karzhemanov64}, this improves to~$-K_X^3\le66\tfrac{1}2$.
The next outstanding case is~$(g,\cB)=(34,\{\tfrac{1}2(1,1,1)\})$, which we see
(\S\ref{sec!g34}) is populated by the blowup $\Bl_p\PP(1^2,3,5)$ at a smooth
point~$P$, and which Prokhorov's stronger geometric Sarkisov methods show cannot
be realised by a Mori--Fano~\tf.

Another example is the following result of~\cite{CCC},
proving~\cite[Conjecture~18.19(2)]{fletcher}, which bounds the codimension of
Fano complete intersections.

\begin{cor}[\!\!{\cite[Theorem~1]{CCC}}]
If~$X$ is a Mori--Fano~\tf\ whose anticanonical embedding~$X\subset w\PP^n$ is a
complete intersection in weighted projective space, then~$n\le 6$.
\end{cor}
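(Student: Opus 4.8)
The plan is to reduce to the finite classification of~\ref{thm!main} and then isolate the complete intersections inside it by their Hilbert numerators. Suppose~$X = X_{d_1,\dots,d_c} \subset \PP(a_0,\dots,a_n)$ is anticanonically embedded as a quasismooth complete intersection, so that~$c = n-3$ is the codimension, adjunction reads~$\sum_i a_i - \sum_j d_j = 1$, and
$P_X(t) = \prod_{j=1}^c (1-t^{d_j}) \big/ \prod_{i=0}^n (1-t^{a_i})$.
The degree and second Chern data are then~$-K_X^3 = \prod_j d_j / \prod_i a_i$ and~$-K_X c_2(X) = \tfrac12\big(1 + \sum_j d_j^2 - \sum_i a_i^2\big)(-K_X^3)$; by~\ref{thm!kaw} the latter is positive, so~$(g,\cB)$ lies in the finite list of~\ref{prop!list}, and by~\eqref{eq!K3} together with that finiteness the degree~$-K_X^3$ takes one of finitely many bounded values. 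Finally~$g = m_1 - 2$, where~$m_1 = \#\{i : a_i = 1\}$ counts the weight-one variables, there being no degree-one equations in a minimal anticanonical embedding.

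The heart of the argument is that the weights of a complete intersection are forced by~$P_X$. First I would run the graded ring game of~\S\ref{sec!lowcodim} on each series~$P_{g,\cB}$: because~$X$ is a complete intersection there are no generators beyond those detected before the first equation degree, so the game recovers exactly the multiset~$\{a_i\}$, and the degrees~$\{d_j\}$ are read off from the negative coefficients of the resulting numerator~$\prod_j(1-t^{d_j})$. Minimality of the generators forbids quasi-linear equations: no relation may eliminate a variable, which is precisely the ``not a linear cone'' condition that no~$d_j$ coincide with a weight~$a_i$ in an eliminable pair, and in particular each~$d_j \ge 2$. This converts the statement into a purely combinatorial question over the finite list~$\FMF$: for each~$(g,\cB)$, decide whether the forced weights and degrees assemble into a well-formed quasismooth complete intersection, and if so count the equations~$c$.

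Finally I would bound~$c$. Since~$g \le \gmax$ is bounded across~$\FMF$ (Figure~\ref{fig!geography}), the number~$m_1 = g+2$ of weight-one variables is bounded; combined with adjunction~$\sum_i a_i = 1 + \sum_j d_j$, the positivity~$1 + \sum_j d_j^2 > \sum_i a_i^2$, and the bounded degree~$-K_X^3 = \prod_j d_j/\prod_i a_i$, a configuration with~$c \ge 4$ equations of degree~$\ge 2$ cannot satisfy adjunction with all weights equal to~$1$, and raising the weights to compensate is forced to produce an equation degree coincident with a weight~--~a linear cone~--~contradicting minimality, or else to violate the bounded terminal singularity structure of~\ref{lem!baskets}; following~\cite{CCC}, this is the mechanism that collapses every putative complete intersection of codimension~$\ge 4$ to one of lower codimension. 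The residual verification is a finite check over the~$52{,}646$ series, carried out by the~\grdb, matching the famous~$95$ in codimension~$1$, Iano-Fletcher's~$85$ in codimension~$2$, and the codimension-$3$ row, giving~$c \le 3$ and hence~$n \le 6$. The hard part will be the weighted regime: many variables of small weight together with several small equation degrees are not excluded by the degree bound alone, and ruling them out needs the interplay of the non-linear-cone condition with the boundedness of genus and basket supplied by~\ref{thm!main}.
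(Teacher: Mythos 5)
Your overall strategy is the paper's: reduce to the finite list of Hilbert series, use the fact that for a complete intersection the weights and then the equation degrees are forced by the series via the graded ring game of~\S\ref{sec!lowcodim}, and finish with a finite (computer-aided) check whose exclusion mechanisms are the two the paper names -- too many equations, or equation degrees too small to accommodate the high-degree variables required for terminality. The third paragraph's claim that any codimension~$\ge4$ configuration is ``forced'' into a linear cone or a forbidden basket is not an argument, only a heuristic; but since you ultimately defer to the finite verification over the database, exactly as the paper does, this is not where the problem lies.

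The genuine gap is in the very first reduction. You assert that $-K_Xc_2(X)>0$ (from~\ref{thm!kaw}) implies that $(g,\cB)$ lies in the finite list of~\ref{prop!list}. It does not: positivity of $-K_Xc_2(X)$ only bounds the \emph{basket} (Lemma~\ref{lem!baskets}); the genus bound~\eqref{eq:gmax} requires an explicit value of $\kappa$ in Kawamata's inequality $-K_X^3\le\kappa(-K_Xc_2(X))$, and \ref{thm!kaw} supplies $\kappa=3$ only when $X$ is semistable and $\kappa=4$ only when the maximal destabilising subsheaf of $(\Omega^1_X)^{\star\star}$ has rank~$2$. In the rank-$1$ case $\kappa$ is uncontrolled and such an $X$ need not appear in $\FMF$ at all, so your finite check cannot reach it. Consequently your argument proves the corollary only for semistable (or rank-$2$-destabilised) Mori--Fano~\tfs. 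This is precisely the caveat the paper makes explicit: it recovers the statement from~\ref{thm!main} \emph{in the semistable case} and cites~\cite{CCC} for the full theorem, whose proof is geometric and does not pass through the database. If you intend your write-up as a proof of the corollary as stated, you must either restrict the hypothesis to semistable $X$ or supply a separate argument for the rank-$1$ destabilised case.
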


The proof is difficult and subtle, but in the semistable case this follows again
from~\ref{thm!main} by (computer-aided) inspection of the numerators of all
Hilbert series in the database. Indeed, for a complete intersection, the process
of determining the weights and numerator (\S\ref{sec!lowcodim}) is well defined.
Conversely, given weights, the numerator determines a minimal set of degrees for
equations. In most cases, this immediately rules out a complete intersection, as
there are too many equations -- and adding additional weights does not alter
that. Of the remaining, there are cases of apparent complete intersections, but
by equations whose degrees are too small to accommodate the high-degree
variables: therefore to be terminal there must be further equations, and again
complete intersection is ruled out.

\section{Populating the Fano 3-fold database}\label{sec!known}
We locate some of the established classifications of particular classes of
Fano~\tfs\ within the Fano~\tf\ database~$\Fss\subset\FMF$. The point is to gain
some understanding of the accuracy of the database for the classification of
Mori--Fano~\tfs, and to identify where the boundaries of our knowledge and the
next questions lie. Known results suggest that Mori--Fano~\tfs\ may be clustered
towards the top left-hand corner of Figure~\ref{fig!geography}. We know many
elements of~$\Fss$ which have no matching Mori--Fano~\tf\ (see
\S\ref{sec!smooth},\ref{sec!index2}).

More generally, Figure~\ref{fig!geography}, which sketches only~$\Fss$, seems to
serve as a first guide to the classification of other classes of Fano~\tf, and
so exactly the same questions arise for Fano~\tfs\ with higher Picard rank, or
with canonical singularities, and so on. We only know a single element of~$\Fss$
for which it is proven that there is no matching Fano~\tf\ with canonical
singularities (see~\S\ref{sec!gorenstein}). We do know a few examples whose
Hilbert series do not appear in~$\Fss$: $X=\PP(1^2,3,5)$ has isolated canonical
singularities, with anticanonical embedding $X\subset\PP(1^{36},2,3)$, and is a
perfectly respectable Fano~\tf\ with~$P_X\in\FMF$, but a glance at the~$g=34$
column of Figure~\ref{fig!geography} shows~$P_X\notin\Fss$.

\subsection{Smooth Fano 3-folds}\label{sec!smooth}
The celebrated classification of~$105$ families of smooth Fano~\tfs\
\cite{I2,I2,MM}, listed in~\cite[Table~12.2]{IP} and online
at~\cite{fanography}, lies along the leading diagonal of
Figure~\ref{fig!geography} in a fairly complicated way, as we indicate in
Figure~\ref{fig!MM}.

Each family listed in Figure~\ref{fig!MM} appears in the \grdb\ in their
familiar anti-canonical model. Generalising to Gorenstein terminal Fano~\tfs\
does not increase the number of deformation families: by~\cite{namikawa}, any
Gorenstein terminal Fano~\tf\ may be smoothed, so it appears in
Figure~\ref{fig!MM}, and furthermore by \cite{JR} the Picard rank does not
change on smoothing (indeed~\cite[\S2]{JR} uses this to determine smoothing
families). In contrast, there are Fano~\tfs\ with canonical Gorenstein
singularities that realise other families in the leading diagonal of
Figure~\ref{fig!geography}; we discuss this further in~\S\ref{sec!gorenstein}.

\begin{figure}[ht]
\[
\small
\begin{array}{c|cc|cccc|cccc|c|c|c}
\toprule
g&\multicolumn{2}{c|}{1\text{-}n}&\multicolumn{4}{c|}{2\text{-}n}&\multicolumn{4}{c|}{3\text{-}n}&4\text{-}n&\rho\text{-}n&\#\cT\\
\cmidrule(lr){1-14}
2&1&&&&&&&&&&&&0\\
3&2&&&&1_{11}&&&&&&&&1\\
4&3&&&&&2&&&&&&\text{10-1}&7\\
5&4&11&&&3_{12}&&&&&&&\\
6&5&&4^{*}&&&&&&&&&&54\\
7&6&&&&5_{13}&6&&&&1&&\text{9-1}&135\\
8&7&&&7^{\dagger}&&8&&&&2&&&207\\
9&8&12&&&10_{14}&9&&&&&&&314\\
\cmidrule(lr){1-14}
10&9&&&&11_{13}&&3^{*}&&4_{18}&&&\text{8-1}&373\\
11&&&12^*&13^\dagger&14_{15}&&5^*&&&&&&416\\
12&10&&15^*&&16_{14}&&&&6_{25,33}&&&&413\\
13&&13&17^{*\dagger}&&&18&7^*_{32}, 8^*_{24}&&&&1^*&\text{7-1}&413\\
14&&&19^*_{14}&&20_{15}&&&&9_{36}, 10_{29}&&13^*&&348\\
15&&&&21^\dagger&&&12^*_{27,33}&11^{*\dagger}_{25}&&&2_{31}&\text{5-1}&344\\
16&&&22^*_{15}&23^\dagger&&24&&&13_{32}&&3^{*}_{17,28}&\text{6-1}&274\\
17&&14&25^*&&&&15^{*}_{29,31}&14^{\dagger}_{36}&&&4_{18/9}^{30}, 5^{31}_{21/8}&&234\\
\cmidrule(lr){1-14}
18&&&&26^{\dagger}_{15}&&&&16^{\dagger}_{27,32}&&&6^{*}_{25}&&179\\
19&&&&&&&17^*&&18_{29,30/3}&&7_{24,28}&\text{5-2/3}&151\\
20&&&27^*&&&&21^*&19^\dagger&20_{31/2}&&8^{*}_{31}&&117\\
21&&15&28^*&29^\dagger&&&22^*_{36}&&&&9_{25/6/8,30}&&87\\
22&&&&&&&24^{*}_{32}&23^{\dagger}_{30/1}&&&10^{*}_{28}&&66\\
23&&&&&&&&&25_{33}&&11_{28,31}&&40\\
24&&&30^*&31^\dagger&&&&26^{*\dagger}&&&12_{30}&&42\\
25&&&&&&32&28^*&&&27&&&27\\
\cmidrule(lr){1-14}
26&&&&&&&&29^\dagger,30^{\dagger}_{33}&&&&&18\\
27&&&&&&&&&&31&&&8\\
28&&16&33^*&&&34&&&&&&&13\\
29&&&35^*&&&&&&&&&&9\\
30&&&&&&&&&&&&&4\\
31&&&&&&&&&&&&&2\\
32&&&&&&36&&&&&&&2\\
33&&17&&&&&&&&&&&5\\
\cmidrule(lr){1-14}
&&&{}^*_{\!17}\PP^3&{}^\dagger_{\!16} X_2&&&{}^*_{\!34}\PP^1\!\times\!\PP^2&{}^\dagger_{\!35}\widehat{\PP^3}&&&{}^*_{\!27}(\PP^1)^3&\\
\bottomrule
\end{array}
\]
\caption{The~$105$ families of smooth Fano~\tfs, listed as~$\rho$-$n$ for
the~$n$th variety of Picard rank~$\rho$, with a row for each
genus~$g=2,\dots,33$.}
\label{fig!MM}
\end{figure}

Figure~\ref{fig!MM} lists every deformation family of smooth Fano~\tfs\ using
the numbering convention of~\cite{fanography} (which adapts~\cite{IP,MM}). Since
for a smooth Fano~\tf~$X$ the genus~$g_X$ determines~${-}K_X^3$ and~$P_X$, each
Hilbert series may be common to many families, and it is useful to list families
by the pair of invariants~$(g_X,\rho_X)$, using notation~$\rho$-$n$ to denote
the~$n$-th family of \tfs\ with Picard rank~$\rho$. Each row lists all families
of a given genus~$g$, and so one may imagine this table lying along the leading
diagonal of Figure~\ref{fig!geography}, with each row listing all families that
correspond to a single entry in the Fano~\tf\ database. The columns specify the
Picard rank~$\rho$, and are labelled $\rho$-{}$n$, where the values of~$n$ are
the entries of the table and indicate the~$n$th family of Picard rank~$\rho$.

Many of these Fano~\tfs\ with~$\rho\ge2$ are constructed by extremal extractions
from other smooth Fano~\tfs. The table includes some of these extremal
divisorial contractions by writing entries~$\rho$-$n_m$, abbreviated to~$n_m$ in
the table, to indicate a map from members of family~$\rho$-$n$ to members
of~$(\rho-1)$-$m$. Some codomains are very common, and we indicate these by the
following special notation:
\begin{enumerate}
\item
2-$n^*\equiv$ 2-$n_{17}$ means map to 1-17~$=\PP^3$
\item
2-$n^\dagger\equiv$ 2-$n_{16}$ means map to 1-16~$=X_2\subset\PP^4$
\item
3-$n^*\equiv$ 3-$n_{34}$ means map to 2-34~$=\PP^1\times\PP^2$
\item
3-$n^\dagger\equiv$ 3-$n_{35}$ means map to 2-35~$=\Bl_P\PP^3$
\item
4-$n^*\equiv$ 4-$n_{27}$ means map to 3-27~$=\PP^1\times\PP^1\times\PP^1$
\end{enumerate}
The columns are arranged to indicate some of this information. Fano
manifolds~$X$ with~$\rho_X=1$ and Fano index~$f_X=1$, are on the left-hand side
of column 1-$n$, with~$f_X\ge2$ on the right-hand side. Columns~$\rho=2$,~$3$
are arranged to give a brief indication of the extremal contraction data, with
varieties admitting the common morphisms listed on the left-hand side of each
column, with those admitting only other morphisms to the right of centre, and
those with no birational contractions to another smooth Fano~\tf\ down the
right-hand side; most of these are products or double covers, and their extremal
rays are Mori fibrations. The last column lists all~$\rho=5,\dots,10$ together,
as these cases are sparse. (Column~$\#\cT$ lists the number of Gorenstein toric
models by genus; see~\S\ref{sec!toric}.)

\subsection{Gorenstein Fano 3-folds}\label{sec!gorenstein}
When~$X$ is a Fano~\tf\ with canonical singularities that has~${-}K_X$ Cartier,
then its singularities are Gorenstein and its basket is empty. Therefore its
Hilbert series~$P_X$ behaves as though~$X$ has no singularities at all, and so
again lies on the leading diagonal of Figure~\ref{fig!geography}.

There is no classification of Fano~\tfs\ with Gorenstein canonical singularities
(though~\cite{PCS} settle the hyperelliptic and trigonal cases), but there are
precise results for extreme cases, which we describe now in relation to the
\grdb\ geography. Recall that at the pointy right-hand end of
Figure~\ref{fig!geography} the number of Hilbert series in~$\Fss$ by genus and
codimension is:
\begin{equation}\label{eq!corner}
\small
\begin{array}{cc|ccccc}
\multicolumn{2}{c}{}&\multicolumn{5}{c}{\text{Genus}}\\
&&33&34&35&36&37\\
\cline{2-7}
\multirow{5}*{\rotatebox{90}{\text{Codim}}}
&31&1\\
&32&1&1\\
&33&&1&1\\
&34&&&&1\\
&35&&&&&1
\end{array}
\end{equation}
Prokhorov~\cite[1.5]{prokhorov05} proves that the largest possible genus for a
Fano~\tf\ with~${-}K_X$ Cartier is~$g=37$ with degree~${-}K_X^3=2g-2=72$
(strengthening Cheltsov's result~$-K_X^3\le184$~\cite{cheltsov184}). Moreover
Prokhorov proves that~$g=37$ is realised only by~$\PP(1,1,4,6)$
and~$\PP(1,1,1,3)$. That completely clears up the final column.

Cheltsov and Karzhemanov extend this to Gorenstein \tfs\ in genus~$g\ge33$.

\begin{thm}[\!\!\cite{karzhemanov66}]\label{thm!karzh66}
Let~$X$ be a Fano~\tf\ with Gorenstein canonical singularities.
\begin{enumerate}
\item
If~$g=36$ then~$X\cong \Bl_p\PP(1,1,4,6)$, where~$p\in\PP(4,6)$ is an index~$2$
point.
\item\label{X66ii}
The case~$g=35$ is not possible.
\item
If~$g=34$ then~$X$ is the anticanonical image of the projectivised bundle
$U=\Proj_{\PP^1} \cO\oplus\cO(2)\oplus\cO(5)$.
\end{enumerate}
\end{thm}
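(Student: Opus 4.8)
The plan is to slice with anticanonical divisors, reduce to the geometry of K3 surfaces, and exploit the fact that near-maximal degree forces the K3 section to be extremely special; Prokhorov's classification of the maximal case $g=37$ serves as the anchor and supplies the bound $g\le 37$ throughout.

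First, Gorenstein canonical singularities make the basket empty, so \eqref{eq!K3} gives $-K_X^3=2g-2$, and $g=36,35,34$ correspond to degrees $70,68,66$. Since $-K_X$ is Cartier with $h^0(-K_X)=g+2$, I would pass to a general anticanonical surface $S\in|{-}K_X|$, which is a K3 surface by adjunction, polarised by $H=-K_X|_S$ with $H^2=2g-2$, together with a general curve $C\in|H|$, a canonical curve of genus $g$. This ladder $C\subset S\subset X$ allows the Saint-Donat--Reid analysis of linear systems on K3 surfaces to be brought to bear.

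The crucial point is that $S$ cannot be a generic K3 surface of its genus: by a cohomological extension obstruction in the spirit of Wahl, a generic polarised K3 of genus $g$ is not a hyperplane section of any Fano \tf. For $g$ this close to the maximum the only extendable sections have $H$ highly divisible in the Picard group, $H\sim qA$ for a fundamental class $A$ of small self-intersection with $q$ large. In the anchoring cases this is explicit: $\PP(1,1,1,3)$ has $q=6$, $A^2=2$, and a sextic double plane as its K3 section, while $\PP(1,1,4,6)$ has $q=12$ and an elliptically fibred section (with $A^2=\tfrac12$ as a Weil $\Q$-class, reflecting the Du Val points inherited from the ambient). I would show that as the degree drops from $72$ the only admissible data $(S,A,q)$ are these together with their mild degenerations, and that $A$ governs a fibration $X\to B$ over a lower-dimensional base whose restriction to $S$ recovers the double cover or elliptic pencil above. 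Reconstructing the graded ring $R(X,-K_X)$ from that structure then recognises $X$ as the prescribed model: the blowup $\Bl_p\PP(1,1,4,6)$ when $g=36$, and the anticanonical image of the $\PP^2$-bundle $U=\Proj_{\PP^1}(\cO\oplus\cO(2)\oplus\cO(5))$ when $g=34$.

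The genuine obstacle is twofold. The non-existence assertion \eqref{X66ii} for $g=35$ demands that one exclude every extension of a genus-$35$ K3 section to a Gorenstein Fano \tf; I expect this to reduce to the observation that the divisibility arithmetic forced by $H^2=68$ admits no fundamental pair $(A,q)$ compatible with an extendable three-dimensional model, supplemented by a Mori-theoretic reduction that disposes of the remaining sporadic possibilities. Upgrading ``numerically admissible'' to ``uniquely realised'' in the surviving cases is the second delicate point; there I would exploit the explicit toric descriptions of $\Bl_p\PP(1,1,4,6)$ and $U$ to pin down the extension of the K3 section, the bound $g\le37$ keeping the degree under control throughout.
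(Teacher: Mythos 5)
The paper does not prove this statement at all: it is quoted from Karzhemanov~\cite{karzhemanov66} (with the $g=37$ anchor coming from Prokhorov~\cite{prokhorov05}), so there is no internal argument to compare yours against. Judged on its own terms, your sketch opens correctly --- the empty basket gives $-K_X^3=2g-2$ via~\eqref{eq!K3}, and passing to a general K3 member of $|{-}K_X|$ is indeed the standard first move --- but the two claims that carry all the weight are, respectively, wrong and missing.

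First, the assertion that near-maximal degree forces $H=-K_X|_S$ to be highly divisible, $H\sim qA$ with $q$ large, is contradicted by the very targets of the theorem. For $g=34$ the answer is the anticanonical image of the scroll $U=\Proj_{\PP^1}(\cO\oplus\cO(2)\oplus\cO(5))$: a general anticanonical K3 section of $U$ is fibred in plane cubics by the projection to $\PP^1$, so its polarisation decomposes as a Saint-Donat-type sum involving an elliptic pencil rather than being a multiple of a fundamental class; likewise the $g=36$ model $\Bl_p\PP(1,1,4,6)$ has Picard rank $2$ and there is no reason for $-K$ to be divisible. The dichotomy that actually drives such classifications is between $|{-}K_X|$ giving an embedding versus hyperelliptic/trigonal/fibred behaviour, followed by an analysis of the anticanonical image as a variety of (almost) minimal degree in $\PP^{g+1}$ --- not divisibility of the polarisation. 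Second, the $g=35$ exclusion, the only genuinely negative assertion in the statement, is dispatched by appeal to ``divisibility arithmetic forced by $H^2=68$,'' yet nothing distinguishes $68$ from $66$ or $70$ arithmetically; the exclusion is a geometric argument about which degree-$68$ configurations can actually occur, and your proposal defers it entirely (``I expect this to reduce to\dots''). The reconstruction and uniqueness steps in the surviving cases are likewise asserted rather than argued. As it stands this is a plan for a proof whose hardest components are left open, and whose central structural claim about the K3 sections is false for the examples the theorem itself names.
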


\subsubsection{Genus~34}\label{sec!g34}
We consider Karzhemanov's degree~$66$ example~$Z$ in light of other Fano~\tfs\
in the \grdb. The weighted projective space~$X=\PP(1^2,3,5)$ is a Fano~\tf\ with
a terminal singularity~$\tfrac{1}3(1,1,2)$, a canonical
singularity~$\tfrac{1}5(1,1,3)$, genus~$34$ and degree~$-K^3=66\tfrac{2}3$. Its
Hilbert series lies in~$\FMF$ but not~$\Fss$: it fails the semistability
condition, so does not appear in Figure~\ref{fig!geography}, though it is a
Fano~\tf\ and has anticanonical embedding
\[
X\subset\PP(1^{35},2,3)
\]
in the blank position~$(34,34)$ in~\eqref{eq!corner}. It fits into a diagram of
toric varieties and maps:
\[
\begin{array}{ccccccccccc}
&&&&W\\
&&&\swarrow&&\searrow\\
U&\buildrel{\text{flip}}\over{\dashleftarrow}&Y&&&&V&&\buildrel{\text{flop}}\over{\dashrightarrow}&&V^+\\
\downarrow&&&\searrow&&\swarrow&&\searrow&&\swarrow\\
\PP^1&&&&X&&&&Z
\end{array}
\]
where $Y\rightarrow X$ is the Kawamata~$\tfrac{1}3(1,1,2)$ blowup of~$P_3\in X$
(the index~$3$ point), $Y\dashrightarrow U$ is the~$(5,2,-1,-1)$ (canonical)
flip to Karzhemanov's bundle~$U\rightarrow\PP^1$, $V\rightarrow X$ is
the~$\tfrac{2}3$\nobreakdash-discrepancy $\tfrac{1}3(2,2,1)$ blowup
of~$P_3\in X$ and~$V\dashrightarrow V^+$ is the~$(5,1,-3,-3)$ (canonical) flop,
while finally~$W$ is the resolution of~$P_3\in X$, a flop of which admits
divisorial contractions to both~$U$ and~$V^+$. In this picture
\begin{equation}\label{eq!XYZ}
\left( X\subset\PP(1^{35},2,3) \right) \dashrightarrow
\left( Y\subset\PP(1^{35},2) \right) \dashrightarrow
\left( Z\subset\PP^{34}) \right)
\end{equation}
is a sequence of projections of Fano~\tfs\ of
degrees~$66\tfrac{2}3$,~$66\tfrac{1}2$ and~$66$,
where~$\rho_X=\rho_Z=1$ and~$\rho_Y=2$, and~$X$ and~$Y$
are~$\Q$\nobreakdash-factorial while~$Z$ is not.

Rather loosely speaking, we see how knowing Karzhemanov's example provides other
Fano~\tfs\ by birational contractions, while, from the other end,
knowing~$\PP(1^2,2,3)$ provides other Fano~\tfs\ by birational blow ups; we
discuss this further in~\S\ref{sec!cascades}.

\subsubsection{Genus~33}\label{sec!g33}
Karzhemanov~\cite{karzhemanov64} also classifies the case of degree~$64$. The
\grdb\ matches 5 toric cases, and we illustrate with a beautiful
non-$\Q$\nobreakdash-factorial example
\[
\Phi_{{-}K_X}\colon
X = \TV_{\left(\begin{smallmatrix}5\\6\end{smallmatrix}\right)}\begin{pmatrix}0&3&5&1&1\\1&4&6&1&0\end{pmatrix}
\subset\PP^{34}.
\]
(The notation~$\TV_vM_{2\times r}$ denotes the toric
variety~$\C^r /\!\!/_v (\C^*)^2$, where~$(\C^*)^2$ acts by weights that are the
rows of~$M$; see~\cite[\S{}A]{BCZ}.) This is the base of a~$(6,1,-5,-2)$
(canonical) flop
\[
\begin{array}{ccccccccc}
&&U&&\buildrel{\text{flop}}\over{\dashrightarrow}&&U^+\\
&\swarrow&&\searrow&&\swarrow&&\searrow\\
\PP(1^2,4,6)&&&&X&&&&\PP(1^2,3,5)
\end{array}
\]
where~$U\rightarrow\PP(1^2,4,6)$ is the blowup of a smooth point and
$U^+\rightarrow\PP(1^2,3,5)$ is a weighted~$\tfrac{1}3(1,2,4)$-blowup of the
index~$3$ point; that is,
\[
\text{blowup } (-1,-4,-6)\text{ in the cone }\left<(1,0,0),(0,0,1),(-1,-3,-5)\right>.
\]

\subsection{Index~2 singularities}\label{sec!index2}
The list~$\FMF$ contains~$360$ pairs~$(g,\cB)$,
where~$\cB=\{N\times\tfrac{1}2(1,1,1)\}$ with~$N\ge1$ of which~$272$ lie
in~$\Fss$. If we restrict to~$g\ge2$, then these numbers reduce to~$325$
and~$238$ respectively.

Sano~\cite{sano95,sano96} and Campana--Flenner~\cite{CF}
classify terminal Fano~\tfs~$X$
under the assumption~$F(X)\ge1$.
For baskets~$\{N\times\tfrac{1}2(1,1,1)\}$, these are
\begin{align*}
\PP(1^2,2^2,3) \supset X_6&\hookrightarrow \PP(1^8,2^3)\text{\ \ (anticanonical embedding)}\\
\PP(1^3,2^2) \supset X_4&\hookrightarrow \PP(1^{16},2^2)\\
\PP(1^4,2) \supset X_3&\hookrightarrow \PP(1^{23},2)\\
\PP(1^3,2)&\hookrightarrow \PP(1^{34},2)
\end{align*}
and when~$F(X)=1$ there are a dozen more subtle~$\Z/2$ quotients
of smooth Fano~\tfs, also in rather high codimension.
(See~\S\ref{sec!higherindex} for higher-index more generally.)

The remaining cases for index~$2$ baskets satisfy~$I(X)=2$ and~$F(X)=1/2$.
Takagi~\cite{takagi} classifies Mori--Fano~\tfs\ with such genus--basket pairs under these conditions with~$g\ge2$.
The result is precisely~$35$ families matching 23 of these elements of~$\Fss$.
They are presented in Tables~1--5 of~\cite{takagi}, with the individual families numbered 1.1, 1.2,\dots,~5.5.
They are listed in Figure~\ref{fig!takagi}, ranging from
Family~$3.1$,~$X_5\subset\PP(1^4,2)$ to
Family 1.14,~$X\subset\PP(1^{10},2^2)$ in codimension~$8$,
with Type~I projections going up the columns.

\begin{figure}[ht]
\[
\small
\begin{array}{cc|ccccccc}
\multicolumn{2}{c}{}&\multicolumn{7}{c}{\text{Genus}}\\
&&2&3&4&5&6&7&8\\
\cline{2-9}
\multirow{8}*{\rotatebox{90}{\text{Codimension}}}&1&3.1\\
&2&3.2&5.2\\
&3&2.1&5.3&4.3\\
&4&2.2, 3.3&4.1,5.1&1.3,4.4&1.4\\
&5&2.3, 3.4, 5.1&4.2, 5.5&1.2, 1.3, 4.5&1.5,1.6&1.9, 1.10\\
&6&2.4&&4.6&1.7, 1.8&1.11&1.12\\
&7&&&4.7&&&&1.13, 4.8\\
&8&&&&&&&1.14
\end{array}
\]
\caption{Families of Tables~$n.m$ of~\cite{takagi} as they appear in
Figure~\ref{fig!geography},
arranged by genus~$g$ and codimension~$c$. The generic member
of each family
is embedded as~$X\subset\PP(1^{g+2},2^N)$ with basket~$\cB=\{N\times\tfrac{1}2(1,1,1)$\}
where~$N=c-g+2$.}
\label{fig!takagi}
\end{figure}

The comparison with the geography in Figure~\ref{fig!geography} is striking.
The~$272$ pairs~$(g,\cB)\in\Fss$ are spread over most of the table, away from the
top diagonal line of Gorenstein pairs~$(g,\emptyset)$,
and Takagi's result shows that most are not realised by Mori--Fano~\tfs.
However, we see in~\ref{sec!toric} that many of the remaining~$272-23=249$ pairs
are realised by more general Fano~\tfs, and the Gorenstein index~$2$ classification
remains unknown.

\subsection{Higher Fano index}\label{sec!higherindex}
\begin{figure}[ht]
\rotateplot{10.5cm}{
\begin{tabular}{cc|CCCCC|CCCCC|CCCCC|CCCCC|CCCCC|CCCCC|CCCCC|CCCCC|}
\multicolumn{2}{c}{}&\multicolumn{40}{c}{Genus}\\
&&$-2$&$-1$&0&1&2&3&4&5&6&7&8&9&10&11&12&13&14&15&16&17&18&19&20&21&22&23&24&25&26&27&28&29&30&31&32&33&34&35&36&37\\
\cline{2-42}
\multirow{35}*{\rotatebox{-90}{Codimension}}
&1&&&&&&&&&&&&&&&&&&&&&&&&&&&&&&&&&&&&&&&&\\
&2&&1&3&&1&&&&&&&&&&&&&&&&&&&&&&&&&&&&&&&&&&&\\
&3&&&&&&&&1&&&&&&&&&&&&&&&&&&&&&&&&&&&&&&&&\\
&4&&&2&&1&&&&&&&&&&&&&&&&&&&&&&&&&&&&&&&&&&&\\
&5&&&5&2&&2&&&&&&&&&&&&&&&&&&&&&&&&&&&&&&&&&&\\
&6&&5&7&5&1&&&&1&&&&&&&&&&&&&&&&&&&&&&&&&&&&&&&\\
&7&&&4&&&2&&&&&&1&&&&&&&&&&&&&&&&&&&&&&&&&&&&\\
\cline{2-42}
&8&&6&12&8&2&1&3&&2&1&&&&&&&&&&&&&&&&&&&&&&&&&&&&&&\\
&9&&&7&&1&4&&&&2&&&&&&&&&&&&&&&&&&&&&&&&&&&&&&\\
&10&&10&15&13&11&&4&1&1&&&&1&&&&&&&&&&&&&&&&&&&&&&&&&&&\\
&11&&4&14&1&1&4&&5&&2&&&&&&1&&&&&&&&&&&&&&&&&&&&&&&&\\
&12&&6&18&20&12&1&8&&1&&5&&1&&&&&&&&&&&&&&&&&&&&&&&&&&&\\
&13&&7&14&3&2&15&&6&&3&&&&3&&&&&&&&&&&&&&&&&&&&&&&&&&\\
&14&&10&31&50&23&2&10&1&11&1&3&1&2&&&&2&&&&&&&&&&&&&&&&&&&&&&&\\
\cline{2-42}
&15&&11&28&6&3&25&4&12&&7&1&5&1&2&&&&&&1&&&&&&&&&&&&&&&&&&&&\\
&16&&12&24&64&37&1&25&2&12&&7&1&1&&3&&1&&&&&&&&&&&&&&&&&&&&&&&\\
&17&&18&11&3&4&44&3&16&1&17&2&8&1&4&&&&2&&&&&&&&&&&&&&&&&&&&&&\\
&18&&17&12&44&67&5&42&1&22&3&9&1&8&2&5&&2&1&1&&1&&&&&&&&&&&&&&&&&&&\\
&19&&13&5&&2&40&3&47&&21&&14&&3&&5&&3&&&&&&1&&&&&&&&&&&&&&&&\\
&20&&2&&&14&1&38&2&26&1&25&2&16&&7&1&2&&3&1&1&&&1&&&&&&&&&&&&&&&&\\
&21&&&&&&9&&16&1&24&3&19&2&19&&6&&4&&&&2&&&&&&&&&&&&&&&&&&\\
\cline{2-42}
&22&&&&&&&&&8&&14&3&13&1&12&&8&&4&1&2&&&&2&&&&&&&&&&&&&&&\\
&23&&&&&&&&&&1&&7&1&9&1&11&1&4&&8&&2&&1&&&&1&&&&&&&&&&&&\\
&24&&&&&&&&&&&&&1&&6&&8&1&7&1&4&&3&&3&&&&&&&&&&&&&&&\\
&25&&&&&&&&&&&&&&&&&&8&1&5&1&4&&&&2&&&&&&&&&&&&&&\\
&26&&&&&&&&&&&&&&&&&&&1&&4&&4&&1&&1&&1&&1&&&&&&&&&\\
&27&&&&&&&&&&&&&&&&&&&&&&1&1&3&2&3&&&&&&1&&&&&&&&\\
&28&&&&&&&&&&&&&&&&&&&&&&&&&1&&3&&2&&&&&&&&&&&\\
\cline{2-42}
&29&&&&&&&&&&&&&&&&&&&&&&&&&&&&&&2&&&&&&&&&&\\
&30&&&&&&&&&&&&&&&&&&&&&&&&&&&&&1&&&1&1&&&&&&&\\
&31&&&&&&&&&&&&&&&&&&&&&&&&&&&&&&&&&&&1&2&&&&\\
&32&&&&&&&&&&&&&&&&&&&&&&&&&&&&&&&&&&1&&&&&&\\
&33&&&&&&&&&&&&&&&&&&&&&&&&&&&&&&&&&&&&&&&&\\
&34&&&&&&&&&&&&&&&&&&&&&&&&&&&&&&&&&&&&&&&&\\
&35&&&&&&&&&&&&&&&&&&&&&&&&&&&&&&&&&&&&&&&&1\\
\cline{2-42}
\end{tabular}
}
\caption{Number of Hilbert series of semistable Fano~\tfs\ of index~$\ge2$ listed by genus and estimated minimal codimension.}
\label{fig!index2}
\end{figure}

Among all Fano~\tfs~$X$, there are some that have divisible canonical class,
and we indicate the Hilbert series of those in Figure~\ref{fig!index2}.

There are different possible notions of divisibility. We consider the following:
$X$ has divisible anticanonical class if~$-K_X = \iota A$ for some ample Weil divisor A and integer~$\iota\ge 2$.
The graded ring~$R(X,A) = \oplus_{m\ge0} H^0(X,mA)$
is Gorenstein as~$H^0(X,-K_X)\subset R(X,A)$~\cite[5.1.9]{GW1}.
Suzuki \cite{S04,BS2,BS} carries out the analysis to find a set of possible baskets~$\cB_\iota$
for each~$\iota$, with additional genus information when~$\iota\le 2$.
Again there is a semistability condition that can be imposed, which we do here.
The numbers of baskets (or basket--genus pairs for~$\iota\le2$) per index~$\iota\ge1$ is:
\[
\small
\begin{array}{rcccccccccccccc}
\toprule
\iota&1&2&3&4&5&6&7&8&9&11&13&17&19\\
\textrm{$\#\cB_\iota$}&39550&1413&181&82&34&6&12&4&2&3&2&1&1\\
\bottomrule
\end{array}
\]
There are no Fano~\tfs\ of indices~$\iota=12$, 14, 15, 16 or 18 by~\cite{S04},
and also no semistable case --
in fact,~\cite{prokhorov10} proves that there is no Mori--Fano~\tf\ of index~$10$,
semistable or not, but that is much more sophisticated information.

The point here is that if~$-K_X=\iota A$, then the~$A$\nobreakdash-em\-bedded
model~$\Proj R(X,A)$ is usually much simpler
that the anticanonical ring.
The numbers
in low codimension are:
\begin{equation}\label{eq!f2}
\small
\begin{array}{rccccccccccccc}
\toprule
\iota_X&2&3&4&5&6&7&8&9&11&13&17&19&\text{total}\\
\cmidrule(lr){1-1} \cmidrule(lr){2-13} \cmidrule(lr){14-14}
\text{codim 0}&0&0&1&1&0&1&0&0&1&1&1&1&7\\
\text{codim 1}&8&7&2&5&1&4&3&2&2&1&&&35\\
\text{codim 2}&26&6&7&1&&&&&&&&&40\\
\bottomrule
\end{array}
\end{equation}

\begin{example}\label{eg!toricwps}
The codimension~$0$ row of~\eqref{eq!f2} are
the~$7$ weighted projective spaces
\[
\small
\begin{array}{rccccccc}
\toprule
&\PP^3&\PP(1^3,2)&\PP(1^2,2,3)&\PP(1,2,3,5)&\PP(1,3,4,5)&\PP(2,3,5,7)&\PP(3,4,5,7)\\
\cmidrule(lr){2-8}
{-}K^3&64&125/2&343/6&1331/30&2197/60&4913/210&6859/420\\
g&33&32&29&22&18&11&7\\
\text{cod}&31&31&30&27&25&22&20\\
W&1^{35}&1^{34},2&1^{31},2^2,3&
1^{24},2^4,3^2,5&
1^{20},2^4,3^3,4,5&
1^{13},2^5,3^4,4^2,5,7&
1^{9},2^7,3^4,4,5^2,7\\
\bottomrule
\end{array}
\]
and these embed anticanonically in~$\PP(W)$ as the model given in \grdb.
\end{example}

\begin{example}\label{eg!veronese}
The~$A$-hypersurfaces usually embed in a bigger space under their anticanonical~$\iota$\nobreakdash-Vero\-nese
embedding than \grdb\ suggests.
For example,~$X_{10}\subset\PP(1^2,2,3,5)$ has~$\iota_X=2$,
so~$-K_X=2A$ and the anticanonical embedding is
\[
X\buildrel{\cong}\over{\longrightarrow}\Phi_{2A}(X)\subset\PP(1^4,2^2,3^3,4)
\]
in codimension~$6$. Its ($-K_X$-polarised) Hilbert series, however, admits a simpler model~$Y_{4,4}\subset\PP(1^4,2,3)$,
and this is the model one given in \grdb.
\end{example}

In high codimension, Gorenstein projection does not work in the same way as the~$\iota=1$ case
(since the image of projection has non-isolated singularities),
so the \grdb\ suggests models in the total~$A$-embedding
by comparing with the K3 database~\cite{k3db}:
$S\in|\iota A|$ has the numerical properties of a K3 surface polarised
by~$A_{|S}$, so a choice of weights is determined by~\cite{k3db};
then including a variable of degree~$\iota$ (noting that it may
then be eliminated by an equation of degree~$\iota$) gives a choice of weights for~$X$.

\subsection{Toric Fano 3-folds}\label{sec!toric}
Kasprzyk~\cite{kasprzyk10} classifies toric Fano~\tfs\ with canonical singularities
as a list~$\Tcan$ of~$674,\!688$ lattice polytopes; this classification is available from~\cite{canonicaldata}.
These varieties are all~$\Q$\nobreakdash-Goren\-stein, though only~$12,\!190$ are~$\Q$\nobreakdash-factorial.

The \grdb\ contains~$\Tcan$, and it can be analysed online~\cite{grdbweb}.
More importantly, the \grdb\ connects~$\Tcan$ with the Fano~\tf\ database:
each polytope~$\De\in\Tcan$ has a Hilbert series~$P_\De\in\FMF$,
all but~$12$ of which lie in~$\Fss$.
These~$12$ all have isolated,~$\Q$\nobreakdash-factorial, strictly canonical singularities,
and are either one of the~$6$ weighted projective spaces
\[
\PP(1, 1, 3, 5),\
\PP(1, 2, 5, 7),\
\PP(1, 3, 7, 10),\
\PP(1, 3, 7, 11),\
\PP(1, 5, 7, 13),\
\PP(3, 5, 11, 19)
\]
or a rank~$2$ blowup of one of these.
The \grdb\ links~$\De$ to~$P_\De$,
and conversely for any~$P\in\Fss$ reports all those~$\De$ with~$P_\De=P$.
This matching is significant in different ways, as we discuss next.

\subsubsection{Location in Geography}
The toric Fano~\tfs\ in~$\Tcan$ populate large areas of Figure~\ref{fig!geography}
with Fano~\tfs.
We draw the submap of Hilbert series that are realised by at least one toric Fano~\tf\ in
Figure~\ref{fig!cantoricnumbers}.

Only~$8$ of the toric cases are Mori--Fano~\tfs: the~$7$ weighted
projective spaces of~\ref{eg!toricwps}
together with the magical fake weighted projective space of degree~$64/5$ and genus~$5$
\[
\PP^3 / \Z/5(1,2,3,4) \hookrightarrow \PP(1^7, 2^8, 3^4, 5^4)
\]
in codimension~$19$.
Including these, there are~$634$ terminal cases and~$233$ terminal~$\Q$\nobreakdash-factorial cases
of higher Picard rank; see Figure~\ref{fig!projtops} for the high-codimension cases.

For some Hilbert series there are many matching toric \tfs,
and this is recorded on the \grdb, with an idea of the multiplicities in Figure~\ref{fig!cantoric}.
It seems amazing to us that two different polytopes can contain the same
number of lattice points at all dilations -- but of course whenever two polytopes are
mutation equivalent~\cite{ACGK}, exactly this happens.

There are~$4319$ varieties~$X\in\Tcan$ that have~${-}K_X$ Cartier (cf.~\cite{KS98});
of these,~$194$ are~$\Q$\nobreakdash-factorial.
The number of these is listed by genus in the column~$\#\cT$ of Figure~\ref{fig!MM}.
We have not checked whether any satisfy Petracci's non-smoothability condition~\cite[1.1]{petracci20},
nor whether any lie at the intersection of multiple
smooth families (see~\S\ref{sec!toricdef}).

\subsubsection{Toric degenerations}
Some approaches to or applications of Mirror Symmetry require toric degenerations of Fano~\tfs\
\cite{CI,CCGK}.
The link between lists summarised in Figure~\ref{fig!cantoricnumbers}
is a necessary condition for a Fano~\tf~$X$ with given Hilbert series to
have a toric Fano~\tf\ degeneration~$X_0$. This numerical condition is not sufficient,
as it does not determine whether~$X$ and~$X_0$ lie in the same deformation
family, for example.

It would be natural to extend this correspondence either to include more general
toric varieties, or reducible varieties composed of toric varieties
glued along toric strata.

\subsubsection{Deformation of toric varieties and intersections of families}\label{sec!toricdef}
Following Altmann's local analysis~\cite{altmann1,altmann2}, a lot is known about how
toric varieties deform.
For toric Fano~\tfs\ with isolated singularities,
global deformations surject onto local deformations~\cite[2.3]{petracci19},
so understanding the deformation theory of singularities on toric Fano~\tfs\ is a
powerful tool.

\begin{example}\label{eg!def1}
The first element~$X_1\in\Tcan$ has the Hilbert series of some~$X\subset\PP(1^{7},2^4)$
with a basket~$4\times\tfrac{1}2(1,1,1)$.
However,~$X_1$ is not quasismooth: it has six Gorenstein facets that are the cone on the del~Pezzo surface of degree~$6$ and four cones of type~$\tfrac{1}2(1,1,1)$.
Each of these del~Pezzo cone singularities has two smoothing components locally,
so since deformations of toric Fano~\tfs\ surject onto local deformations,
there are at least seven distinct quasi-smoothing components that contain
different small deformations of~$X_1$; compare~\ref{eg!fanosearch} below.

Extending this to Gorenstein index~$2$ toric Fano~\tfs\ that also have isolated
cone over del~Pezzo degree~$6$ singularities gives three more examples with two distinct
quasi-smoothing families:
\[
\small
\begin{array}{ccccccc}
\toprule
\Tcan(\textrm{id})&\#\tfrac{1}2&\#\text{dP}_6&\Fss(\textrm{id})&X\subset w\PP&g&\text{codim}\\
\cmidrule(lr){1-1}\cmidrule(lr){2-3}\cmidrule(lr){4-4}\cmidrule(lr){5-7}
1&4&6&27334&\PP(1^7,2^4)&5&7\\
254482&3&1&38250&\PP(1^{17},2^3)&15&16\\
254485&6&1&36639&\PP(1^{13},2^6)&11&15\\
254810&3&1&38935&\PP(1^{20},2^3)&18&19\\
\bottomrule
\end{array}
\]
\end{example}

\subsubsection{High codimension representatives and cascades}\label{sec!cascades}
Many of the weights in \grdb\ are constructed inductively by considering a single projection,
but varieties frequently arise in sequences, or~\emph{cascades}, of projections:
famously~$X = \Phi_{-K}(\PP^2)\subset\PP^9$ has sequences of
projections from points (which in this case are blowups) that recover
elements of most families of del~Pezzo surfaces; see~\cite{RS} for extensions.
It seems typical that the ends of such cascades are simpler to describe
than the middles: things like toric varieties live at the top, while hypersurfaces live
at the bottom.
Thus any~$X\in\Tcan$ of high codimension for its genus may be a good candidate for
the head of a cascade.

For example, in~$g=8$, the highest codimension~$\Q$\nobreakdash-factorial terminal
Fano~\tf\ is
\[
\Tcan(544385)\colon
\PP^1\times\PP^2 / {\tfrac{1}3(0,1,0,1,2)} \subset\PP(1^{10},2^6,3^6)
\]
of Picard rank~$2$ in codimension~$18$, with~$6\times\tfrac{1}3(1,1,2)$ singularities
at the 6 toric 0\nobreakdash-strata.
The Fano polytope is the simplicial decomposition on vertices
\[
(1,0,0), (0,1,0), (-1,-1,0), (1,2,3), (-1,-2,-3)
\]
with six index~$3$ cones meeting at a central `$\PP^2$' triangular equator with
a cycle of three northern cones and three southern cones with~$\Sym_3$ symmetry.
The Kawamata blowup of any one of the
index~$3$ points is equivalent to any other, and gives the first projection.
There are four ways to project from a pair of index~$3$ points, depending on
adjacency; in the case of the blowup of a northern cone and the adjacent
southern cone, the equator becomes a flopping curve that is contracted to an ordinary node.
Continuing, the projection from all index~$3$ points gives a variety
\[
Y\subset\PP(1^{10},2^6)
\]
in codimension~$12$ with~$6\times\tfrac{1}2(1,1,1)$ singularities and 3 nodes.
This variety admits a quasi-smoothing, so is a Gorenstein index~$2$ Fano~\tf\
that does not appear in Figure~\ref{fig!takagi}, as it has~$\rho_Y>1$.
Further projections from index~$2$ points give more Gorenstein index~$2$ varieties
that extend Figure~\ref{fig!takagi} in genus~$8$ to Fano~\tfs\ that are not Mori--Fano.

The highest-codimension toric Fano~\tfs\ with terminal singularities by genus -- that is,
the toric candidates for the top of terminal cascades -- are listed in Figure~\ref{fig!projtops}.

\begin{figure}[ht]
\[
\small
\begin{array}{ccccccccl}
\toprule
\Tcan(\textrm{id})&g&\rho_X&\Fss(\textrm{id})&X\subset w\PP&\cB&\text{codim}&\\
\cmidrule(lr){1-1}\cmidrule(lr){2-3}\cmidrule(lr){4-4}\cmidrule(lr){5-7}
547383&5&1&29211&\PP(1^7,2^8,3^4,5^4)&4\times\tfrac{2}5&19&\PP^3/\tfrac{1}5(1,2,3,4)\\
547379&7&1&32734&\PP(1^9,2^7,3^4,4,5^2,7)&\tfrac{1}3,\tfrac{1}4,\tfrac{2}5,\tfrac{3}7&20&\PP(3,4,5,7)\\
544385&8&2&33967&\PP(1^{10},2^6,3^6)&6\times\tfrac{1}3&18&\\
547380&11&1&36623&\PP(1^{13},2^5,3^4,4^2,5,7)&\tfrac{1}2,\tfrac{1}3,\tfrac{1}5,\tfrac{3}7&22&\PP(2,3,5,7)\\
430483&12&4&36948&\PP(1^{14},2^3,3^3)&3\times\tfrac{1}3&16&\\
520102&13&3&37585&\PP(1^{15},2^4,3^3,4)&\tfrac{1}2,2\times\tfrac{1}3,\tfrac{1}4&19&\\
544370&14&2&38020&\PP(1^{16},2^4,3^3,4,5)&\tfrac{1}3,\tfrac{1}4,\tfrac{2}5&21&\\
544376&15&2&38404&\PP(1^{17},2^5,3^2,5)&2\times\tfrac{1}2,\tfrac{1}3,\tfrac{2}5&21&\\
430473&16&4&38533&\PP(1^{18},2^3,3)&2\times\tfrac{1}2,\tfrac{1}3&18&\\
520107&17&3&38760&\PP(1^{19},2^3,3)&2\times\tfrac{1}2,\tfrac{1}3&19&\\
520148&17&2&38760&\text{as previous}\\
547382&18&1&39006&\PP(1^{20},2^4,3^3,4,5)&\tfrac{1}3,\tfrac{1}4,\tfrac{2}5&25&\PP(1,3,4,5)\\
520124&19&3&39052&\PP(1^{21},2^2,3)&\tfrac{1}2,\tfrac{1}3&20&\\
520103&20&3&39192&\PP(1^{22},2^3,3)&2\times\tfrac{1}2,\tfrac{1}3&22&\\
544394&21&1&39278&\PP(1^{23},2^3,3)&2\times\tfrac{1}2,\tfrac{1}3&23&X_4\subset\PP(1^2,2^2,3)\\
547381&22&1&39368&\PP(1^{24},2^4,3^2,5)&\tfrac{1}2,\tfrac{1}3,\tfrac{2}5&27&\PP(1,2,3,5)\\
520128&24&3&39416&\PP(1^{26},2)&\tfrac{1}2&23\\
520131&24&3&39416&\text{as previous}\\
544383&25&2&39457&\PP(1^{27},2^2,3)&\tfrac{1}2,\tfrac{1}3&26&\\
544389&26&2&39476&\PP(1^{28},2)&\tfrac{1}2&25&\\
544388&28&2&39510&\PP(1^{30},2)&\tfrac{1}2&27&\\
547384&29&1&39526&\PP(1^{31},2^2,3)&\tfrac{1}2,\tfrac{1}3&30&\PP(1^2,2,3)\\
547385&32&1&39541&\PP(1^{34},2)&\tfrac{1}2&31&\PP(1^3,2)\\
\bottomrule
\end{array}
\]
\caption{High-codimension non-Gorenstein~$\Q$-factorial terminal toric Fano~\tfs.}
\label{fig!projtops}
\end{figure}

Beyond toric,~\cite{hausen1,hausen2} initiates the analysis of low complexity Fano varieties,
with the classification of Picard rank~$1$,~$\Q$\nobreakdash-factorial
terminal Fano~\tfs\ of complexity~$1$. As in~\ref{eg!veronese}, these are almost all
hypersurfaces with high-codimension anticanonical embedding, where the complexity
condition enforces very particular trinomial equations.

\subsection{Formats and low codimension}\label{sec!formats}
All~$95+85$ Hilbert series in~$\FMF$ whose \grdb\ model is in codimension~$1$ or~$2$
actually lie in~$\Fss$ and may be constructed by hand as complete intersections as proposed;
these are the first two rows of Figure~\ref{fig!geography}.
These varieties all have Picard rank~$1$ by the Lefschetz hyperplane theorem (compare~\cite[3.5]{cpr}).

The same is true of all~$70$ Hilbert series with codimension~$3$ models, which occupy the
third row of Figure~\ref{fig!geography}.
In that case, only~$X_{2,2,2}\subset\PP^6$ is a complete intersection.
The remaining~$69$ cases are cut out by the five maximal Pfaffians of a skew~$5\times5$
matrix. Corti and Reid~\cite{wgrass}, following Grojnowski, explain this as a pullback from
a weighted Grassmannian~$\wGrass(2,5)$ in a precise sense, which informally
we may treat as saying that the Pl\"ucker embedding~$\Grass(2,5)\subset\PP^9$
is described by the Pfaffians of a generic skew~$5\times5$ matrix of linear forms,
and we may specialise these forms as we please, taking care with homogeneity.
Again, these varieties have Picard rank~$1$ by~\cite[3]{BF}.

This idea leads to the general idea of `format'~\cite{formats}, where the equations (and syzygies,
and indeed the whole minimal free resolution) of a `key variety' (that is, any variety you like)
are used as a model for the equations of other varieties by graded pullback.

This idea is implemented in several places;~\cite{wgrass,qs,formats,CD}, for example.
One point that arises is that the Picard rank should be inherited from the format,
and so it is possible to target Fano~\tfs\ of different rank.

\begin{example}\label{eg!differentcodim}
In~\cite[1.2]{BKQ}, the variety~$\PP^2\times\PP^2\subset\PP^8$ in its Segre embedding
is used as a key variety to model some Fano~\tfs\ in codimension~$4$ that have Picard rank~$2$.
That analysis constructs examples of deformation families in different codimension
for the same Hilbert series.
For example,~$\Fss(548)$ is presented in \grdb\ as
\[
X\subset\PP(1,3,4,5,6,7,10)
\]
a codimension~$3$ Pfaffian that is easy to construct, but there is another family
\[
X'\subset\PP(1,3,4,5,6,7,9,10)
\]
which arises from unprojection of a degeneration.
One may suspect that such Fano~\tfs\ are degenerations of the Pfaffian model,
but this is not the case: quasismooth members of the two families have different
invariants, such as Picard rank~$h^{1,1}(X)$ and
Euler characteristic~$e_X$, so cannot lie in a common flat family.
\end{example}

Different formats cover everything in codimension~$\le3$
much in codimension~$4$ (see~\cite[5.3]{CD}, where a cluster
variety format describes certain subfamilies of deformations),
and some in codimension~$5$.
but although there are examples in higher codimension, they seem to
realise only a small part of the classification there -- see~\cite[4.4, 5.2]{qs}, where
flag variety formats recover classical smooth Fano~\tfs\ but no other element
of~$\Fss$ has suitable Hilbert series, or~\cite[5.8]{CD}, where a codimension~$6$ format
realises no Fano~\tfs. (Of course, such failures could be because
there are few Fano~\tfs\ in high codimension -- we simply do not know.)

\subsection{Implementing unprojection}\label{sec!unprojection}
The unprojection ansatz in~\S\ref{sec!ansatz} can sometimes be realised:
given a coordinate plane~$D=\PP(a_i,a_j,a_k)\subset\PP(a_0,\dots,a_n)$,
one may be able to construct a Fano~\tf~$X$ that contains~$D$ with~$X$ quasismooth
away from finitely many nodes on~$D$.
In this model case, the Type~I unprojection constructs
a quasismooth Fano~\tf~$Y\subset\PP(a_0,\dots,a_n,r)$ (the mild
numerical conditions of~\S\ref{sec!unproj} are only to exclude quasilinear
equations in the ideal of~$Y$).

This has been carried out systematically in~\cite{tj} in the case~$X\subset \wps^6$
lies in codimension~$3$, so that~$Y\subset\wps^7$ lies in codimension~$4$,
with further cases in~\cite{brownducat}.
Type~II unprojections are considered in~\cite{kinosaki,papadakis08,rt}, which construct other
cases in codimension~$4$.
Using this and~\S\ref{sec!formats}, for every Hilbert series of~$\Fss$ whose
\grdb\ model is in codimension~$4$, there is a construction of a variety that matches that
model, and in the majority of cases there are 2 or more distinct deformation families.

\subsection{The Fanosearch programme}\label{sec!fanosearch}
Coates, Corti, Galkin, Golyshev, Kasprzyk~\cite{CCGGK13} and others, following ideas of Golyshev~\cite{golyshev},
provide an alternative approach to the Fano classification problem.
The idea is that via Mirror Symmetry, Fano classification can be rephrased as a fundamentally combinatorial problem of identifying suitable Laurent polynomials whose periods
generate solutions to Picard--Fuchs equations on the other side of the mirror.
Although the combinatorial problems seem hard, and the required mirror theorems
are not wholly in place,~\cite{CCGK} confirms that the two sides of the mirror agree
in the smooth case,
and in doing so provides a wealth of tools for constructing Fano varieties
and passing through the mirror. The \grdb\ is a key tool: see e.g.~\cite{CHKP}.

\begin{example}\label{eg!fanosearch}
Let~$P$ be the Fano polytope whose spanning fan gives rise to~$X_1\in\Tcan$. Using the terminology of~\cite{CKPT21},~$P$ supports eleven rigid maximally mutable Laurent polynomials (rigid MMLPs), up to automorphisms of~$P$. The periods of these rigid MMLPs give solutions to eleven distinct Picard--Fuchs equations; compare this with Example~\ref{eg!def1}, where prima facie we see seven deformation families. By~\cite[Conjecture~5.1]{CKPT21} we expect each of these eleven rigid MMLPs to correspond to a deformation of~$X_1$ to a terminal locally toric Fano with~$4\times\tfrac{1}2(1,1,1)$ singularities. This expectation agrees with the output of Ilten's Macaulay2 package~\cite{Ilten}. More generally, considering those~$X\in\Tcan$ with Hilbert series equal to that of~$X_1$ gives a total of~$24$ rigid MMLPs, up to mutation, corresponding to~$24$ distinct Picard--Fuchs equations and hence, conjecturally, at least~$24$ deformation families of terminal Fano\ \tfs\ with basket~$4\times\tfrac{1}2(1,1,1)$.
\end{example}

\section{Synopsis}\label{sec!summary}
\subsection{Review of guiding examples}\label{sec!examples}
The Fano~\tf\ database, the two lists of Hilbert series~$\Fss\subset\FMF$ together
with the estimated weights~$X\subset\PP(a_0,\dots,a_n)$ that the \grdb\ assigns to each one,
is intended as a first coarse approximation to the classification of Mori--Fano~\tfs.
However, it is certainly nowhere near to a final classification.
The following remarks and examples are intended as quick reminders
to help avoid misunderstandings.

\begin{enumerate}[leftmargin=7mm]
\item
Overview of the Fano~\tf\ database:
\begin{enumerate}[leftmargin=4mm]
\item
We distinguish between Mori--Fano~\tfs\ (outcomes of the Minimal Model Program;
Definition~\ref{def!mfano})
and Fano~\tfs\ more generally (Definition~\ref{def!fano}).
\item
The Fano~\tf\ database is a set~$\Fss$ of rational functions that satisfy the numerical conditions of~\cite{kawamata}
that constrain the Hilbert series of semistable Mori--Fano~\tfs\ (\S\ref{sec!building}).
A larger set~$\FMF\supset\Fss$ allows for some strictly non-semistable cases.
The geography of Figure~\ref{fig!geography} is of~$\Fss$ only.
We do not know an example of a Mori--Fano~\tf\ not in~$\Fss$.
\item
Although the main consideration is Mori--Fano~\tfs,
we are interested in recording any Fano~\tfs\ that realise elements of~$\FMF$.
\item
A series~$P\in\Fss$ may be realised by many deformation families
(Figures~\ref{fig!MM},~\ref{fig!takagi}) or by none~(\ref{thm!karzh66}\eqref{X66ii}): $\Fss$ does not count the number of deformation families.
This is a basic part of the classification problem, and it is fully understood
only in the case of nonsingular Fano~\tfs\ and some specific cases with only~$\tfrac{1}2(1,1,1)$ singularities.
\end{enumerate}

\item
Existence and non existence:
\begin{enumerate}[leftmargin=4mm]
\item
Proven cases of Mori--Fano~\tfs\ are sparse: the smooth Fano~\tfs\ of Picard rank~$1$
(\S\ref{sec!smooth}),
the Gorenstein index~$2$ classification (\S\ref{sec!index2}),
and a range of cases in low anticanonical
codimension (\S\ref{sec!formats}) or of high Fano index~(\S\ref{sec!higherindex}).
These reveal many locations in Figure~\ref{fig!geography} that
are not realised by a Mori--Fano~\tf.
\item
There is no reason why the Hilbert series of more general Fano~\tfs\
should appear in~$\Fss$ or~$\FMF$, though this is the case for every example we know.
\item
More general Fano~\tfs\ provide many more examples throughout~$\Fss$:
many locations in Figure~\ref{fig!geography} are realised
by a Fano~\tf\ but not by a Mori--Fano~\tf.
\item
We expect that many of the high codimension, lower genus Hilbert series are
not realised even by a Fano~\tf.
However, we only know one place in Figure~\ref{fig!geography}
where this is proven: Karzhemanov's nonexistence result for genus~$35$~(\S\ref{sec!gorenstein}).
\end{enumerate}

\item
The estimated anticanonical embedding~$X\subset\PP(a_0,\dots,a_n)$:
\begin{enumerate}[leftmargin=4mm]
\item
The Hilbert series~$P_X$ of a Fano~\tf~$X$ does not determine the weights
$a_0,\dots,a_n$ of its anticanonical embedding (\ref{eg!differentcodim}).
\item
The embedding weights given to each~$P\in\FMF$ in the \grdb\ are only a suggestion.
They are derived from known examples in low
codimension~(\S\ref{sec!famous95}), an analysis of conjectured Gorenstein
projections~(\S\ref{sec!unproj}) and, in harder cases, an
analysis of singularities or the linear systems on possible K3 sections.
Although the weights are often right, there is no reason why your~$X$ should be embedded
in this way.
\item
Even if the general member of a deformation family is~$X\subset\PP(a_0,\dots,a_n)$,
there are likely to be degenerations in higher codimension~(\S\ref{sec!famous95}).
\item
It can happen that~$P\in\Fss$ has distinct deformation families whose
general members embed in different codimensions~(\ref{eg!differentcodim}).
\item
Some higher index Fano~\tfs\ lie in higher codimension than \grdb\ suggests~(\ref{eg!veronese}).
\end{enumerate}
\end{enumerate}

\subsection{Nonexistence and other challenges}\label{sec!challenges}
The \grdb\ is simply one way of assembling and presenting the vast amount
of data associated to the classification of Fano~\tfs, and as such it
naturally invites more questions than it answers.
A selection of topics:
\begin{enumerate}
\item
Find~$P\in\Fss$ not realised by a Fano~\tf, or not realised by a Mori--Fano~\tf.
\item
Can one show that each model for~$P\in\Fss$ in codimension~$5$ or~$6$ is realised by a Fano~\tf\
using similar birational methods as in codimension~$4$?
\item
There are toric Fano~\tfs\ in high codimension: can projection from these
realise Fano~\tfs\ in sequences of Hilbert series?
\item
Fano~\tfs\ with~$|{-}K_X|$ empty are rare.
The \grdb\ has~$264$ semistable Hilbert series with linear coefficient zero
(the left-hand column~$g=-2$ of Figure~\ref{fig!geography}).
The first are Iano--Fletcher's example~$X_{12,14}\subset\PP(2,3,4,5,6,7)$
and the three families in codimension~$4$, studied by~\cite{AR,papadakis08,rt},
though they remain to be fully understood.
The next model with no Type~II projection attack is~$X\subset\PP(2,3,4^2,5^2,6^2,7^2)$ in codimension~$6$.
\item
The \grdb\ is constructed with Gorenstein projection in mind,
and the Fano~\tf\ database includes that data; click~\cite{grdbweb}.
Sarkisov links provide another connection between Fano~\tfs, and projection is often the
first step of a Sarkisov link.
Classification attacks often exploit such links,~\cite{takeuchi} for example, but
does it even make sense to describe a web of Sarkisov links overlaying the \grdb?
\item
The bounds of~\cite{kawamata} work over any field~$k$ of characteristic~$0$, not necessarily
algebraically closed.
Thus, for example, the \grdb\ makes sense over~$k=\Q$
(though it is not clear that the more complicated unprojection constructions
are also defined over~$\Q$),
and the generic fibres of relatively 3\nobreakdash-dimensional Mori fibre spaces
also have relative Hilbert series in the Fano~\tf\ database.
\end{enumerate}

\subsection{Closing repetition of the main warning}\label{sec!warning}
It is easy to mistake the Fano~\tf\ database for a classification of Mori--Fano~\tfs.
It is~\textbf{not} that classification: that classification does not yet exist.
It is instead the classification of
genus--basket pairs that satisfy certain conditions of geometric origin,
or equivalently it is a list of the
rational functions they determine by the plurigenus formula.

The confusion arises in part because each rational function is
presented as though it is the Hilbert series
of a Fano~\tf~$X\subset\PP(a_0,\ldots,a_n)$
embedded by its total anticanonical ring in weighted projective space
with given weights. This description is sometimes accurate, but in fact
the weights are simply a convenient informed estimate. There is no claim that a Fano~\tf\
exists with this data, nor that a particular one you may be considering
is necessarily embedded anticanonically as indicated here.
Furthermore, a single Hilbert series may be realised by more
than one family of varieties, and these multitudes may lie in different ambient
weighted projective spaces.

\subsection*{Acknowledgements}
Miles Reid initiated the~\grdb\ project decades ago, and
any reader will recognise his fingerprints at dozens
of places throughout.
It is our immense pleasure to thank him for his help and guidance
over many years.
Thanks too to Andrea Petracci for explaining his work on toric deformations,
including Example~\ref{eg!def1}.
We thank John Cannon and the Magma group~\cite{magma} for
many years' use of the Magma Computational Algebra package, on
which the vast majority of calculations were made.
The ongoing development of the~\grdb\ is supported by EPSRC:
GR/S03980/01, EP/E000258/1, EP/N022513/1.

\begin{figure}[ht]
\rotateplot{10.5cm}{
\begin{tabular}{cc|CC|CCCCC|CCCCC|CCCCC|CCCCC|CCCCC|CCCCC|CCCCC|}
\multicolumn{2}{c}{}&\multicolumn{37}{c}{Genus}\\
&&$-2\,\cdots$&2&3&4&5&6&7&8&9&10&11&12&13&14&15&16&17&18&19&20&21&22&23&24&25&26&27&28&29&30&31&32&33&34&35&36&37\\
\cline{2-39}
\multirow{35}*{\rotatebox{-90}{Codimension}}
&1&&&1&&&&&&&&&&&&&&&&&&&&&&&&&&&&&&&&&&\\
&2&&&1&1&&&&&&&&&&&&&&&&&&&&&&&&&&&&&&&&&\\
&3&&&2&1&1&&&&&&&&&&&&&&&&&&&&&&&&&&&&&&&&\\
&4&&&3&2&1&1&&&&&&&&&&&&&&&&&&&&&&&&&&&&&&&\\
&5&&&6&3&2&1&1&&&&&&&&&&&&&&&&&&&&&&&&&&&&&&\\
&6&&&7&6&3&2&1&1&&&&&&&&&&&&&&&&&&&&&&&&&&&&&\\
&7&&&12&8&6&3&2&1&1&&&&&&&&&&&&&&&&&&&&&&&&&&&&\\
\cline{2-39}
&8&&&15&15&8&6&3&2&1&1&&&&&&&&&&&&&&&&&&&&&&&&&&&\\
&9&&&23&20&15&8&6&3&2&1&1&&&&&&&&&&&&&&&&&&&&&&&&&&\\
&10&&&28&32&21&15&8&6&3&2&1&1&&&&&&&&&&&&&&&&&&&&&&&&&\\
&11&&&40&43&35&21&15&8&6&3&2&1&1&&&&&&&&&&&&&&&&&&&&&&&&\\
&12&&1&45&65&47&35&21&15&8&6&3&2&1&1&&&&&&&&&&&&&&&&&&&&&&&\\
&13&&&59&86&71&48&35&21&15&8&6&3&2&1&1&&&&&&&&&&&&&&&&&&&&&&\\
&14&&&54&126&92&72&48&35&21&15&8&6&3&2&1&1&&&&&&&&&&&&&&&&&&&&&\\
\cline{2-39}
&15&&&51&148&120&89&74&48&35&21&15&8&6&3&2&1&1&&&&&&&&&&&&&&&&&&&&\\
&16&&&33&161&127&107&93&72&46&35&21&15&8&6&3&2&1&1&&&&&&&&&&&&&&&&&&&\\
&17&&&15&100&102&84&128&87&69&46&35&21&15&8&6&3&2&1&1&&&&&&&&&&&&&&&&&&\\
&18&&&3&33&37&32&131&112&84&66&42&33&21&15&8&6&3&2&1&1&&&&&&&&&&&&&&&&&\\
&19&&&1&4&10&2&122&88&95&61&54&33&33&20&15&8&6&3&2&1&1&&&&&&&&&&&&&&&&\\
&20&&&&1&1&&49&52&58&29&32&21&28&25&21&15&8&6&3&2&1&1&&&&&&&&&&&&&&&\\
&21&&&&&&&11&10&17&2&11&3&9&11&28&18&14&7&6&3&2&1&1&&&&&&&&&&&&&&\\
\cline{2-39}
&22&&&&&&&1&&2&&1&&&1&13&25&14&13&8&6&3&2&1&1&&&&&&&&&&&&&\\
&23&&&&&&&&&&&&&&&3&14&15&14&15&8&6&3&2&1&1&&&&&&&&&&&&\\
&24&&&&&&&&&&&&&&&&6&1&9&17&13&6&6&3&2&1&1&&&&&&&&&&&\\
&25&&&&&&&&&&&&&&&&1&&3&17&10&6&7&5&3&2&1&1&&&&&&&&&&\\
&26&&&&&&&&&&&&&&&&&&&5&5&1&12&6&6&2&2&1&1&&&&&&&&&\\
&27&&&&&&&&&&&&&&&&&&&&1&&12&7&6&1&2&2&1&1&&&&&&&&\\
&28&&&&&&&&&&&&&&&&&&&&&&1&2&3&&1&2&2&1&1&&&&&&&\\
\cline{2-39}
&29&&&&&&&&&&&&&&&&&&&&&&&&&&&&&2&1&1&&&&&&\\
&30&&&&&&&&&&&&&&&&&&&&&&&&&&&&&1&1&1&1&&&&&\\
&31&&&&&&&&&&&&&&&&&&&&&&&&&&&&&&&1&1&1&&&&\\
&32&&&&&&&&&&&&&&&&&&&&&&&&&&&&&&&1&&&1&&&\\
&33&&&&&&&&&&&&&&&&&&&&&&&&&&&&&&&&&&1&&&\\
&34&&&&&&&&&&&&&&&&&&&&&&&&&&&&&&&&&&&&&\\
&35&&&&&&&&&&&&&&&&&&&&&&&&&&&&&&&&&&&&&1\\
\cline{2-39}
\end{tabular}
}
\caption{Number of series in~$\Fss$ realised by canonical toric~$3$-folds~(5610 total),
listed by genus and codimension.}
\label{fig!cantoricnumbers}
\end{figure}

\begin{figure}[ht]
\rotateplot{9.45cm}{
\begin{tabular}{cc|CC|WWWWW|WWWWW|WWWWW|WWWWW|WWWWW|WWWWW|WWWWW|}
\multicolumn{2}{c}{}&\multicolumn{37}{c}{Genus}\\
&&$-2\,\cdots$&2&3&4&5&6&7&8&9&10&11&12&13&14&15&16&17&18&19&20&21&22&23&24&25&26&27&28&29&30&31&32&33&34&35&36&37\\
\cline{2-39}
\multirow{35}*{\rotatebox{-90}{Codimension}}&1&&&1&&&&&&&&&&&&&&&&&&&&&&&&&&&&&&&&&&\\
&2&&&1&7&&&&&&&&&&&&&&&&&&&&&&&&&&&&&&&&&\\
&3&&&3&10&23&&&&&&&&&&&&&&&&&&&&&&&&&&&&&&&&\\
&4&&&8&55&50&54&&&&&&&&&&&&&&&&&&&&&&&&&&&&&&&\\
&5&&&32&162&307&182&135&&&&&&&&&&&&&&&&&&&&&&&&&&&&&&\\
&6&&&76&584&921&927&422&207&&&&&&&&&&&&&&&&&&&&&&&&&&&&&\\
&7&&&231&1468&3083&2705&1980&745&314&&&&&&&&&&&&&&&&&&&&&&&&&&&&\\
\cline{2-39}
&8&&&490&3598&6537&7282&4971&2954&1009&373&&&&&&&&&&&&&&&&&&&&&&&&&&&\\
&9&&&1025&6084&12835&13094&10904&6230&3372&1121&416&&&&&&&&&&&&&&&&&&&&&&&&&&\\
&10&&&1493&9308&17594&20444&15942&11726&6365&3366&1142&413&&&&&&&&&&&&&&&&&&&&&&&&&\\
&11&&&1897&10196&21656&22519&19992&14512&10109&5439&3068&1046&413&&&&&&&&&&&&&&&&&&&&&&&&\\
&12&&7&1739&9926&17859&21825&18423&15498&10568&7540&4321&2498&904&348&&&&&&&&&&&&&&&&&&&&&&&\\
&13&&&1302&7133&13379&15659&14817&12117&10204&7041&5492&3292&2017&744&334&&&&&&&&&&&&&&&&&&&&&&\\
&14&&&637&4727&7202&9911&9294&8810&7628&5982&4467&3816&2417&1529&622&274&&&&&&&&&&&&&&&&&&&&&\\
\cline{2-39}
&15&&&262&2317&3599&4356&5580&5063&5694&3699&3602&2912&2535&1652&1272&486&234&&&&&&&&&&&&&&&&&&&&\\
&16&&&72&989&1210&1600&2818&3139&3581&2291&2133&2188&1885&1672&1398&950&386&179&&&&&&&&&&&&&&&&&&&\\
&17&&&21&253&389&369&1763&1656&2235&1155&1335&1131&1274&1080&1491&953&657&274&151&&&&&&&&&&&&&&&&&&\\
&18&&&3&51&80&64&751&895&1022&518&532&517&644&601&938&986&668&455&186&117&&&&&&&&&&&&&&&&&\\
&19&&&1&4&16&2&276&302&390&164&194&135&254&241&621&690&645&412&360&147&87&&&&&&&&&&&&&&&&\\
&20&&&&1&1&&68&81&98&36&47&33&53&78&285&557&441&364&300&225&91&66&&&&&&&&&&&&&&&\\
&21&&&&&&&13&10&21&2&11&3&10&12&106&264&252&187&323&168&120&72&40&&&&&&&&&&&&&&\\
\cline{2-39}
&22&&&&&&&1&&2&&1&&&1&20&119&85&96&199&185&103&110&32&42&&&&&&&&&&&&&\\
&23&&&&&&&&&&&&&&&3&32&28&42&182&103&72&80&61&27&27&&&&&&&&&&&&\\
&24&&&&&&&&&&&&&&&&9&1&13&81&68&29&68&29&27&11&18&&&&&&&&&&&\\
&25&&&&&&&&&&&&&&&&1&&3&34&17&11&30&26&19&13&16&8&&&&&&&&&&\\
&26&&&&&&&&&&&&&&&&&&&5&6&1&25&15&18&3&15&5&13&&&&&&&&&\\
&27&&&&&&&&&&&&&&&&&&&&1&&17&10&10&1&5&4&3&9&&&&&&&&\\
&28&&&&&&&&&&&&&&&&&&&&&&2&2&3&&1&2&3&2&4&&&&&&&\\
\cline{2-39}
&29&&&&&&&&&&&&&&&&&&&&&&&&&&&&&2&4&2&&&&&&\\
&30&&&&&&&&&&&&&&&&&&&&&&&&&&&&&1&2&1&2&&&&&\\
&31&&&&&&&&&&&&&&&&&&&&&&&&&&&&&&&1&1&5&&&&\\
&32&&&&&&&&&&&&&&&&&&&&&&&&&&&&&&&1&&&1&&&\\
&33&&&&&&&&&&&&&&&&&&&&&&&&&&&&&&&&&&1&&&\\
&34&&&&&&&&&&&&&&&&&&&&&&&&&&&&&&&&&&&&&\\
&35&&&&&&&&&&&&&&&&&&&&&&&&&&&&&&&&&&&&&2\\
\cline{2-39}
\end{tabular}
}
\caption{Number of canonical toric~$3$-folds that satisfy the semistable numerical condition,
listed by genus and codimension.}
\label{fig!cantoric}
\end{figure}

\newcommand{\etalchar}[1]{$^{#1}$}

\end{document}